\newcommand{\ds}{^\diamondsuit} %alt: \bigstar \dagger
\newcommand{\Sp}{{\mathrm{Span}}}
\newcommand{\Id}{{\mathrm{Id}}}
\newcommand{\SL}{{\mathrm{SL}\,}} 
\newcommand{\PSL}{{\mathrm{PSL}\,}}
\newcommand{\bF}{{\mathbb F}}
\newcommand{\bL}{{\mathbb L}}
\newcommand{\bM}{{\mathbb M}}
\newcommand{\bN}{{\mathbb N}}
\newcommand{\bZ}{{\mathbb Z}}
\newcommand{\fA}{{\mathfrak A}} 
\newcommand{\fB}{{\mathfrak B}} 
\newcommand{\fS}{{\mathfrak S}}
\newcommand{\mBT}{{\mathcal T}{\mathcal B}} 
\newcommand{\mD}{{\mathcal D}} 
\newcommand{\mF}{{\mathcal F}}
\newcommand{\mG}{{\mathcal G}}
\newcommand{\mJ}{{\mathcal J}}
\newcommand{\mP}{{\mathcal P}} 
\newcommand{\mS}{{\mathcal S}}
\newcommand{\mT}{{\mathcal T}} 
\newcommand{\mX}{{\mathcal X}} 
\newcommand{\mY}{{\mathcal Y}}
\newtheorem*{thmh}{Main Theorem}
\newtheorem*{thmT}{Tits Theorem}
\newtheorem{defn}{Definition}[section]
\newtheorem{theorem}[defn]{Theorem}
\newtheorem{lemma}[defn]{Lemma}
\newtheorem{cor}[defn]{Corollary}
\newtheorem{prop}[defn]{Proposition}
\begin{document}
%\begin{frontmatter}
%  \title{Kac-Moody Groups and Completions\footnote{To memory of Kay Magaard, taken too soon}}
%\title{Kac-Moody Groups and Completions\footnote{In memory of Kay Magaard}}
\title{Kac-Moody Groups and Completions}
\author{Inna Capdeboscq}
\email{I.Capdeboscq@warwick.ac.uk}
%\ead{I.Capdeboscq@warwick.ac.uk}
\address{Department of Mathematics, University of Warwick, Coventry, CV4 7AL, UK}
\author{Dmitriy Rumynin}
\email{D.Rumynin@warwick.ac.uk}
%\ead{D.Rumynin@warwick.ac.uk}
\address{Department of Mathematics, University of Warwick, Coventry, CV4 7AL, UK\newline
\hspace*{0.31cm}  Associated member of Laboratory of Algebraic Geometry, National
Research University Higher School of Economics, Russia}
%\thanks{The research was partially supported by the Russian Academic Excellence Project `5--100' and by Leverhulme Foundation.}
\thanks{Both authors were supported by Leverhulme Foundation.
The second author was partially supported by the Russian Academic Excellence Project `5--100'. 
We are grateful to Guy Rousseau for many interesting discussions and his  valuable suggestions.
We would also like to thank the anonymous referees for many helpful recommendations that led to improvement of the paper.}
\date{February 20, 2020}
\subjclass{Primary  20G44; Secondary 22A05}
\keywords{Kac-Moody group, BN-pair, completion}

%\dedicatory{To memory of Kay Magaard, taken too soon}
\dedicatory{In memory of Kay Magaard}
\begin{abstract}
  In this paper
  we construct a new ``pro-$p$-complete'' topological Kac-Moody group
  and compare it to various known topological Kac-Moody groups.
  We come across this group by investigating the process of
  completion of groups with BN-pairs.
  We would like to know whether the completion of such a group
  admits a BN-pair.
  We give explicit criteria for this to happen.
    \end{abstract}

%\begin{keyword}
%  Kac-Moody group
%  \sep
%  BN-pair
%  \sep completion
%
%\MSC[2010]  20G44 \sep 22A05
%\end{keyword}

%\begin{extrainfo}
%To memory of Kay Magaard, taken too soon}
%\end{extrainfo}
%\end{frontmatter}

\maketitle

In this paper we study Kac-Moody groups over finite fields. 

%What is a topological Kac-Moody group?
%To answer this question, one needs to understand
A Kac-Moody group  is a generalisation of the notion of a reductive group to a more general Kac-Moody root datum $\mD$ or a closely related group with a BN-pair. Connected reductive groups are classified via a one-to-one correspondence to root data of finite type. A root datum of finite type yields a group scheme, generalised by 
Tits to a construction of  
a functor $G_\mD$ from the category of commutative
rings to the category of groups \cite{T2,T3}.
For instance, 
the Steinberg central extension $\widehat{\SL_n (\bF_q[z,z^{-1}])}$ is  a Kac-Moody group
$G_\mD(\bF_q)$ for the simply-connected root datum $\mD$ of the affine type $\widetilde{A}_{n-1}$ (see \cite[sec. 6]{CKRu} for further details).
On the other hand, $\SL_n (\bF_q[z,z^{-1}])$ is not of the form $G_\mD(\bF_q)$,
yet it is still called a Kac-Moody group.

A topological Kac-Moody group
is a locally compact totally disconnected topological group 
that contains a Kac-Moody group.
%Often a topological Kac-Moody group is obtained by a completion of $G_\mD(\bF_q)$.
Often it is obtained by completion of a  Kac-Moody group.
In the examples of the previous paragraph, one arrives at topological Kac-Moody groups 
$\widehat{\SL_n (\bF_q((z))\,)}$ and  $\SL_n (\bF_q((z))\,)$.

There are several known 
topological Kac-Moody groups.\footnote{Since this paper a new book by Marquis \cite{Mar3} was published, which would be a comprehensive source for further reading on the subject.}
They are the Mathieu-Rousseau group  $G^{ma+}$, the Carbone-Garland group $G^{c\lambda}$  and  the Caprace-R\'emy-Ronan group $G^{crr}$. 
Each of them contains $G_\mD(\bF_q)$. 
In this paper we show the existence of a new topological Kac-Moody group $\widehat{G}$.

\begin{thmh}
Let $A$ be an irreducible generalised Cartan matrix, $\mD$  a simply connected root datum of type $A$ and $\mathbb{F}=\mathbb{F}_q$  a finite field of characteristic $p$ ($q=p^a$, $a\in\mathbb{N}$). Let $G{\coloneqq}G_\mD(\bF_q)$
be the corresponding minimal Kac-Moody group. Recall that it has a BN-pair $(B,N)$ with $B=U\rtimes T$ where $U=\langle X_{\alpha}\mid \alpha\in\Delta^{re}_+\rangle$ (see Section~2, where the notations are introduced).

There exists a locally compact totally disconnected group $\widehat{G}$ satisfying the following conditions:
\begin{enumerate}
\item $G$ is a dense subgroup of $\widehat{G}$.
\item $\widehat{G}$ has a BN-pair $(\widehat{B}, N)$ where $\widehat{B}=\widehat{U}\rtimes T$ and $\widehat{U}$ is the full pro-$p$ completion of $U$.
\item If $\overline{G}=G^{crr}$ or $C^{c\lambda}$, or in the case when $G$ is dense in $G^{ma+}$ and $\overline{G}=G^{ma+}$, then there exists an open continuous surjective homomorphism $\widehat{G}\twoheadrightarrow \overline{G}$.
\item Let $Z'(\widehat{G}){\coloneqq}Z(G)\times C(\widehat{G})$ where $C(\widehat{G})=\cap_{g\in\widehat{G}} \widehat{U}^g$.  \begin{enumerate}
\item $\widehat{G}/Z'(\widehat{G})$ is a topologically simple group.
\item If $A$ is $2$-spherical, then $\widehat{G}/Z'(\widehat{G})$ is an abstractly simple group.
\end{enumerate}
\end{enumerate}

\end{thmh}

%We would like to study them by investigating
%the process of completion itself.

Let us explain the content of the present paper.
We investigate the process of completion of a group $G$ with a BN-pair
in Chapter 1.
The main result is Theorem~\ref{complete_1}
that contains a sufficient condition for the completion $\widehat{G}$
to inherit a BN-pair from $G$. It relies on Tits' description of groups with BN-pairs
by generators and relations \cite{kn::T}.
The remainder of Chapter 1 contains several technical or user-friendly results about these completions. 
For instance, Theorem~\ref{complete_2} conveniently constructs the completion $\widehat{G}$ together with
its BN-pair.

We put our completion results to good use in Chapter 2.
After quickly recalling the definition of $G_\mD (\bF_q)$
we construct the new group $\widehat{G_\mD (\bF_q)}$.
We compare it to other known completions and address
its topological simplicity in Theorem~\ref{theorem::top_simple}
and
its algebraic simplicity in Theorem~\ref{theorem::alg_simple}.
Thus, the  Main Theorem  is a combination of results in Section 2.

There have been previous attempts to compare various topological Kac-Moody groups:
Capdeboscq and R\'emy \cite{kn::CR},
Baumgartner and R\'emy \cite[2.6]{kn::E},
Marquis \cite{Mar2},
and Rousseau \cite{kn::Rou}
all discuss the maps between different completions at length.
We address these questions in Section 2 
only modulo ``congruence kernel'' $Z^\prime (\widehat{G})$
whose full computation remains mysterious. We devote the last chapter of the paper
to several observations
about $Z'(\widehat{G})$. Our major insight into the nature of the congruence kernel is
its parabolic decomposition in Theorem~\ref{approximate}.

Hristova and Rumynin study representations of topological  Kac-Moody groups \cite{HrRu}. The groups $\widehat{G}$ %constructed here 
are new examples for their theory.

%\section*{Acknowledgement}
%The research was partially supported by the Russian Academic Excellence Project `5--100' and by Leverhulme Foundation
%Both authors were supported by Leverhulme Foundation.
%The second author was partially supported by the Russian Academic Excellence Project `5--100'. 
%We are grateful to Guy Rousseau for many interesting discussions and his  valuable suggestions.
%We would also like to thank the anonymous referees for many helpful recommendations that led to improvement of the paper.

\section{Completion Theorem}
Let $(G,\mT)$ be a Hausdorff topological group ($G$ is a group, $\mT$ is a topology).  
%with a BN-pair $(B,N)$.
The topology determines a right uniformity on $G$ that we now
describe following Bourbaki \cite{kn::Bou}.
Pick a basis $\mJ$ of the topology at $1$. The basis of uniformity $\mT\ds$
is $\mJ\ds=\{ V\ds | V \in \mJ\}$ where
$V\ds = \{ (x,y)\in G^2 %\mbox{ (or } B^2\mbox{) }
| xy^{-1}\in V\}$.
The completion $\widehat{G}$ is the set of all minimal Cauchy filters on $(G,\mT\ds)$.
Recall that a filter is a non-empty collection $\mF$ of open sets closed under intersections and oversets. A filter is Cauchy if it contains arbitrary ``small'' subsets, i.e., for each $V\in\mJ$ there exists $U\in\mF$ such that $xy^{-1}\in V$ for all $x,y\in U$.
We define the left uniformity $\,\ds\mT$ in a similar way, using
$\,\ds{V} = \{ (x,y)\in G^2 %\mbox{ (or } B^2\mbox{) }
| x^{-1}y\in V\}$ instead.
%We will never use the left completion because
The inverse map
is an isomorphism of uniform spaces $\mbox{Inv}:(G,\mT\ds)\rightarrow(G,\,\ds\mT)$
imposing an isomorphism between the right and left completions. 

The completion $\widehat{G}$ is always a monoid, although the multiplication
$\mbox{Mult}:(G,\mT\ds)\times(G,\mT\ds)\rightarrow(G,\mT\ds)$
is not uniformly continuous in general.
It is a monoid because $\mbox{Mult}(\mF,\mG)$ is a Cauchy filter
for two Cauchy filters $\mF,\;\mG$ on $(G,\mT\ds)$ \cite[Prop. III.3.4(6)]{kn::Bou}. 
On the other hand, the completion is not necessarily a group:  
$\mbox{Inv}:(G,\mT\ds)\rightarrow(G,\,\ds\mT)$
is uniformly continuous but we have no information about
uniform continuity of 
$\mbox{Inv}:(G,\mT\ds)\rightarrow(G,\mT\ds)$.
The latter uniform continuity is a sufficient
(but not necessary) condition for $\widehat{G}$ to be a group.
A necessary and sufficient condition is the following: 
if $\mF$ is a Cauchy filter on $(G,\mT\ds)$,
then $\mbox{Inv}(\mF)$ is a Cauchy filter on $(G,\mT\ds)$
\cite[Th. III.3.4(1)]{kn::Bou}. 

For reader's convenience we sketch an example of a group $G$ with non-a-group $\widehat{G}$
following a hint in Bourbaki \cite[Exercise X.3(16)]{kn::Bou}.
Let $G$ be the group of auto-homeomorphisms of $[0,1]$
with the topology of uniform convergence. It suffices to exhibit a uniformly convergent sequence
$f_m$ of homeomorphisms such that the sequence of inverses $f_m^{-1}$ is not uniformly convergent.
The following sequence fits the bill:
$$
f_m (x)=
\begin{cases} 
  x^m & \mbox{ if } \  x\leq \frac{1}{2} \\
  (2-2^{1-m})x+(2^{1-m}-1) & \mbox{ if } \  x\geq \frac{1}{2} 
   \end{cases}
$$

Now suppose that $G$ admits a BN-pair $(B,N)$.
The key question is whether the completion $\widehat{G}$
admits a BN-pair. Let $\overline{B}$ be the closure of $B$ in $\widehat{G}$.
It is a moot point that
$\overline{B}$ is isomorphic to the completion of $B$ in the restriction uniformity $\mT\ds |_B$ \cite[Cor. II.3.9(1)]{kn::Bou}. 
%from $(G,\mT\ds)$ \cite[Cor. II.3.9(1)]{kn::Bou}. 
A candidate BN-pair on $\widehat{G}$ is $(\overline{B},N)$
but it does not work in general.
Let $\mG$ be a simple split group scheme, $G=\mG (\bF [z,z^{-1}])$
its points over Laurent polynomials over a finite field,
$N\leq G$ the group of monomial matrices,
$I_{-}= [\mG(\bF [z^{-1}])\xrightarrow{z^{-1}\mapsto 0} \mG (\bF)]^{-1} (B)$
its negative Iwahori.
The pair $(I_{-},N)$ is a BN-pair on $G$ but
$(\overline{I_{-}},N)=(I_{-},N)$ is not a BN-pair on the positive completion
$\widehat{G}=\mG (\bF((z))\,)$: the countable groups $I$ and $N$ cannot generate uncountable $\widehat{G}$.
A reader can see that the condition 3 of Theorem~\ref{complete_1} fails for the negative Iwahori. On the other hand, if $\bF$ is finite, all conditions of Theorem~\ref{complete_1} holds for the positive Iwahori 
$I_{+}= [\mG(\bF [z])\xrightarrow{z\mapsto 0} \mG (\bF)]^{-1} (B)$
so that the theorem yields the standard BN-pair on $\widehat{G}$.

%Unfortunately we cannot answer the key question
%in full generality.
%\marginpar{Inna, the answer must be negative.
%Should we try to think of a silly counterexample?}
Nevertheless,  we can prove the following partial affirmative
answer, 
sufficiently general for the study of Kac-Moody groups.
Let us explain some notations before stating the theorem.
The notations $(B,N)$ and $W=(W,S)$ are standard for the groups with BN-pairs.
The homomorphism $\pi: N\rightarrow W$ is the natural surjection.
For elements $s\in S$, $w\in W$ we choose some liftings
$\dot{s}\in \pi^{-1}(s)$, $\dot{w}\in \pi^{-1}(w)$.
The minimal parabolic $P_s$ is the subgroup generated by $B$ and $\dot{s}$.

\begin{theorem}
\label{complete_1}
Let $G$ be a Hausdorff topological group with a BN-pair $(B,N)$
with the Weyl group $(W,S)$ where $S$ is finite. 
If the following three conditions hold,
then  $(\overline{B},N)$ is
a BN-pair on the completed group $\widehat{G}$.
\begin{enumerate}
\item The completion $\widehat{G}$ is a group.
%\item Each minimal parabolic $P_s$ is split as a semidirect
%  product $P_s=L_s\ltimes U_s$ where $U_s$ is open in $G$ and
%  $L_s$ is a group
%  with rank 1 BN-pair $(B\cap L_s, N\cap L_s)$.
%  {\bf Not yet used in full: finite index of $B$ in $P_s$ is used}
\item The index $|P_s:B|$ is finite for all $s\in S$.
\item $B$ is open in $G$.
%\item If $s,t\in S$ are conjugate in $W$, then
%there exists $g\in G$ such that $L_s=gL_tg^{-1}$ and
%$P_s=gP_tg^{-1}$.
%  {\bf Not yet used}
\end{enumerate}
\end{theorem}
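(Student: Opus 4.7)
The plan is to verify the four BN-pair axioms for $(\overline{B},N)$ inside $\widehat{G}$. Hypothesis (1) makes $\widehat{G}$ a topological group, so multiplication is jointly continuous and $\overline{B}$ is a closed subgroup. Hypothesis (3) makes $B$ closed in $G$ (open subgroups are closed), which gives the crucial identity $\overline{B}\cap G = B$, and also makes $\overline{B}$ open in $\widehat{G}$. From $\overline{B}\cap G = B$ I would immediately harvest two axioms: $\overline{B}\cap N = B\cap N = H$, so $N/H\cong W$ with generating set $S$ is inherited; and any $b\in B$ with $\dot{s}b\dot{s}^{-1}\notin B$ (which exists in $G$) satisfies $\dot{s}b\dot{s}^{-1}\notin\overline{B}$, giving $\dot{s}\overline{B}\dot{s}^{-1}\ne\overline{B}$. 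Generation $\langle\overline{B},N\rangle=\widehat{G}$ follows because $\langle\overline{B},N\rangle$ is open (it contains the open $\overline{B}$), hence closed, and contains $\langle B,N\rangle = G$, whose closure is $\widehat{G}$.

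The core step is the Bruhat decomposition $\widehat{G}=\bigsqcup_{w\in W}\overline{B}\dot{w}\overline{B}$. For surjectivity, given $x\in\widehat{G}$, density of $G$ in the open set $x\overline{B}$ provides $g=x\overline{b}\in G$; Bruhat in $G$ places $g$ in some $B\dot{w}B$, so $x=g\overline{b}^{-1}\in\overline{B}\dot{w}\overline{B}$. For disjointness, if the cells for $w$ and $w'$ met, then $\overline{B}\dot{w}\cap\dot{w}'\overline{B}$ would be a nonempty open set; density gives $g=\overline{b}\dot{w}=\dot{w}'\overline{b}'$ in $G$, and since $\overline{B}\cap G = B$ one deduces $\overline{b},\overline{b}'\in B$, so $g\in B\dot{w}B\cap B\dot{w}'B$, contradicting Bruhat in $G$ unless $w=w'$. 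A consequence is that each cell $\overline{B}\dot{w}\overline{B}$ is clopen in $\widehat{G}$: it is a union of left cosets of the open subgroup $\overline{B}$, and its complement is the union of the remaining cells.

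The clopenness settles the remaining axiom $\dot{s}\overline{B}\dot{w}\subseteq \overline{B}\dot{w}\overline{B}\cup\overline{B}\dot{s}\dot{w}\overline{B}$: approximating $\overline{b}\in\overline{B}$ by a net $b_\alpha\in B$, the elements $\dot{s}b_\alpha\dot{w}$ lie in $B\dot{w}B\cup B\dot{s}\dot{w}B$ by (BN3) in $G$ and converge to $\dot{s}\overline{b}\dot{w}$, which by closedness lies in $\overline{B}\dot{w}\overline{B}\cup\overline{B}\dot{s}\dot{w}\overline{B}$. The main obstacle I anticipate is the disjointness half of the Bruhat decomposition, since that is the step tightly coupling the topology (density and openness of $\overline{B}$) with the rigidity $\overline{B}\cap G = B$. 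Condition (2) is the natural point for the finer rank-one structure to enter: the finite index $|P_s:B|$ makes $P_s$ a finite disjoint union of cosets of the open subgroup $B$, so the closed parabolic $\overline{P_s}=\overline{B}\sqcup\overline{B}\dot{s}\overline{B}$ is a finite disjoint union of $\overline{B}$-cosets. This aligns the rank-one data of $\widehat{G}$ with that of $G$ and is what makes the Tits generators-and-relations description of the BN-pair transfer cleanly from $G$ to $\widehat{G}$.
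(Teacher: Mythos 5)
Your proof is correct, but it takes a genuinely different route from the paper. The paper does not verify the four Tits-system axioms directly; instead it checks Tits' nine amalgam conditions $({\bf P_1})$--$({\bf P_9})$ for the system $\fB=\{\overline{B},N,\overline{P_s}\}$, invokes Tits' presentation theorem to get an injection of the amalgam $\underset{\fB}{\ast}\,H$ into $\widehat{G}$, and then proves surjectivity by the same open--closed--dense argument you use for generation. The two proofs share the key lemma $\overline{B}\cap G=B$ (the paper's $({\bf P_2})$), but diverge after that: you build the Bruhat decomposition of $\widehat{G}$ by hand (surjectivity of the cells from density of $G$ in the open set $x\overline{B}$, disjointness from density plus $\overline{B}\cap G=B$), deduce that each cell is clopen, and get axiom (T3) by a limit argument into a closed union of two cells. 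This is shorter and more elementary. What the paper's heavier route buys is the extra structural conclusion that $\widehat{G}$ \emph{is} the amalgam of $\{\overline{B},N,\overline{P_s}\}$, which is exactly what conclusion (c) of Theorem~\ref{complete_2} needs; your argument yields only the BN-pair itself. A point worth making explicit: your argument never actually uses hypothesis (2) --- the finite index $|P_s:B|$ enters the paper's proof essentially (in $({\bf P_4})$, $({\bf P_7})$ and $({\bf P_8})$, e.g.\ to show $\overline{B\dot{s}B}=\overline{B}\dot{s}\overline{B}$ and the commensurability of the groups $B_w$), whereas your closing paragraph only gestures at where (2) ``would'' enter without deploying it. Since your verification of the axioms is complete without it, you should either state plainly that (2) is not needed for the bare BN-pair conclusion (which appears to be true, and is consistent with the paper's counterexample $\mG(\bF[z,z^{-1}])$, where it is hypothesis (3) that fails), or drop the final paragraph, which as written suggests a dependence your proof does not have.
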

\begin{proof}
%Since $\widehat{G}$ is a group,
We have systems of
subgroups:  
$\fA= (B,N,P_s; s\in S)$ of $G$
and 
$\fB= (\overline{B},N,\overline{P_s}; s\in S)$
of $\widehat{G}$, where $\overline{X}$ is the closure of $X$. 
The system of groups $\fA$ satisfies all conditions
of Tits' Theorem as observed by Tits \cite{kn::T}.
We claim that under
the assumptions of this theorem %allow us to conclude that
the system $\fB$ also satisfies these conditions.
We will verify this claim at the end of the proof.
For the reader's convenience we restate Tits theorem (cf. \cite[Th. 5.1.8]{Kum}):
\begin{thmT}
  Suppose that the system $\fB$ satisfies the following conditions:
%${\bf (P_1)}$--${\bf (P_9)}$:
  \begin{itemize}
\item[${\bf (P_1)}$]
If $s\neq t \in S$, then $\overline{P_s}\cap \overline{P_t} = \overline{B}$.
\item[${\bf (P_2)}$]
The subgroup $\overline{B}\cap N$ is normal in $N$.
\item[${\bf (P_3)}$]
  Given $s\in S$, let $N_s \coloneqq \overline{P_s}\cap N$.
Then  $N_s/(\overline{B}\cap N)$ is of order 2 for all $s\in S$.
\item[${\bf (P_4)}$]
$\overline{P_s} = 
\overline{B} \cup \overline{B}\dot{s} \overline{B}$ for all $s\in S$.
\item[${\bf (P_5)}$]
The pair $(N/(\overline{B}\cap N),S)$ is a Coxeter group.
\item[${\bf (P_6)}$]
  Let $\pi: N \rightarrow W \coloneqq N/(\overline{B}\cap N)$ be the quotient map.
  For any $n=\dot{s}_1\dot{s}_2\cdots \dot{s}_t\in N$ with $s_i\in S$  such that $s_1s_2\cdots s_t$ is a reduced word in $W$,
  the subgroup $\overline{B}(s_1, \ldots , s_{t})$
  (see \eqref{sub_n})  depends only on $w \coloneqq \pi (n) ={s}_1\cdots {s}_t\in W$ 
  and the homomorphism $\overline{\gamma}(\dot{s}_1, \ldots, \dot{s}_{t})$
  (see \eqref{hom_n}) depends only on $n$.
  (This justifies the notation $\overline{B}_w$ and $\overline{\gamma}_n$ from now on.)
\item[${\bf (P_7)}$]
  If $w\in W$, $s\in S$ satisfy $l(ws)=l(w)+1$, then $\overline{B}_w\overline{B}_s=\overline{B}_{ws}$. 
\item[${\bf (P_8)}$]
  If $w\in W$, $s,t\in S$ satisfy $wtw^{-1}=s$ and $l(wt)=l(w)+1$,
  then for all $x\in\pi^{-1}(s)$, $n\in\pi^{-1}(w)$ and $b\in\overline{B}\setminus\overline{B}_t$,
  there exist $y\in b\overline{B}_t\cap \overline{B}_n$ and $y^{\prime},y^{\sharp}\in\overline{B}_n$ such that
  \begin{itemize}
  \item[(a)]
$x^{\prime\, -1}yx^{\prime}=y^{\prime}x^{\prime}y^{\sharp}$ in $\overline{P_t}$ and 
  \item[(b)]
$x \overline{\gamma}_n (y) x^{-1} = \overline{\gamma}_n (y^\prime) x^{-1} \overline{\gamma}_n(y^\sharp)$ in $\overline{P_s}$.
  \end{itemize}
  where $x^{\prime} \coloneqq n^{-1}x^{-1}n \in \pi^{-1} (t)$.
\item[${\bf (P_9)}$]
  $\overline{B}$ is not normal in any $\overline{P_s}$.
\end{itemize}
  Then the canonical map
  $$N \cup (\cup_{s\in S}\overline{P_s}) \longrightarrow \widetilde{G} \coloneqq\underset{\fB}{\ast}\;H$$
  to the amalgam $\widetilde{G}$ is injective.
  The amalgam $\widetilde{G}$ admits a BN-pair $(\overline{B}, N)$ with a set of simple reflections $S$, where we identify the groups $\overline{B}$ and $N$ with their images in $\widetilde{G}$ under the canonical map.

  Furthermore, consider a group $G^{\prime}$ and an injective function
  $$\varphi : N \cup (\cup_{s\in S}\overline{P_s}) \longrightarrow {G}^{\prime}$$
  such that $\varphi|_N$ and all $\varphi|_{\overline{P_s}}$ are group homomorphisms.
  If $G^{\prime}$ is generated by the image of $\varphi$, then  the canonical homomorphism $\varphi^{\sharp}: \widetilde{G}\rightarrow G^{\prime}$ is an isomorphism.
\end{thmT}

%Let $\widetilde{G}= \underset{\fB}{\ast}\;H$ be the amalgam of $\fB$. %the second system.
%By Tits' Theorem \cite{kn::T} (cf. \cite[Th. 5.1.8]{Kum}), the group $\widetilde{G}$ admits a BN-pair $(\overline{B},N)$ and the natural group homomorphism $f:\widetilde{G} \rightarrow \widehat{G}$ is injective.

Tits' Theorem applies to $G^{\prime}=\widehat{G}$, allowing us to conclude that
the group $\widetilde{G}$ admits a BN-pair $(\overline{B},N)$
and the natural group homomorphism $f:\widetilde{G} \rightarrow \widehat{G}$ is injective.

It remains to check 
surjectivity of $f$. The image $f(\widetilde{G})$
contains $N$ and $B$, which generate $G$.
Hence, $f(\widetilde{G})$ is dense in $\widehat{G}$. 
On the other hand, $f(\widetilde{G})$ contains $\overline{B}$,
which is open in $\widehat{G}$ because $B$ is open in $G$.
Thus, $f(\widetilde{G})$ is open in $\widehat{G}$
but it is a subgroup, hence, $f(\widetilde{G})$ is also closed in $\widehat{G}$.
Being closed and dense,  $f(\widetilde{G})$ must be equal to $\widehat{G}$.

It only remains to verify all nine conditions in Tits' Theorem for $\fB$.
%For reader's convenience, we use Kumar's condition labels but restate each condition in the last sentence of its proof that always starts from ``Therefore''.
%Recall that these nine conditions hold in $G$ as shown by Tits \cite{kn::T}.
Our starting point is that these nine conditions hold in $G$ for $\fA$ as shown by Tits \cite{kn::T}. 

${\bf (P_2)}$
We know that $B\cap N$ is normal in $N$.
Clearly, $B\cap N \subseteq \overline{B}\cap N$.
In the opposite direction,
$\overline{B}\cap N = (\overline{B}\cap G)\cap N$.
An element $x\in\overline{B}\cap G$
is a limit of a net
$$
x = \lim_{m\in \bM} b_m \; , \ \ \ b_m \in B, \ \ \bM \mbox{ is an ordinal.}
$$
This limit works in $G$ as well where $B$ is open, hence, closed.
Thus, $x\in B$ and $\overline{B}\cap G=B$.
%Then $gxg^{-1} = \lim_{n} gb_mg^{-1}$ for any $g\in N$,
%since left and right multiplications are continuous.
%Since $B$ is open,
%there exists an ordinal $N$ such that
%$gb_m x^{-1}g^{-1} \in B$
%and consequently $b_mx^{-1}\in g^{-1}Bg$ for all $n>N$.
%Thus, $\overline{B}\cap N = \overline{B\cap N}$ is
Therefore, $\overline{B}\cap N = B\cap N$ is normal in $N$.

${\bf (P_1)}$ Let $s\neq t \in S$. Since
$P_s= B\dot{s}B \cup B$,
%$P_s\cup P_t = B$,
we conclude that
%$\overline{P_s}\cap \overline{P_t} \supseteq \overline{P_s\cup P_t} = \overline{B}$.
$\overline{P_s} = \overline{B\dot{s}B} \cup \overline{B}$.
%=\overline{B}\dot{s}\overline{B} \cup\overline{B}$.
%\cap \overline{P_t} \supseteq \overline{P_s\cup P_t} = \overline{B}$.    
Let us prove now that
$\overline{B\dot{s}B} \cap \overline{B\dot{t}B} =\emptyset$.
An element
$x\in \overline{B\dot{s}B} \cap \overline{B\dot{t}B}$
is a limit of two nets
$$
x = \lim_{m\in \bM} a_m \dot{s} b_m = \lim_{m\in \bM} c_m \dot{t} d_m\; , \ \ \ a_m,b_m,c_m,d_m \in B. 
$$
Since $\overline{B}$ is open there exists an ordinal $\bL<\bM$ such that
$a_m \dot{s} b_m x^{-1} \in \overline{B} \ni x (c_m \dot{t} d_m)^{-1}$
and consequently $a_m \dot{s} b_m d_m^{-1} \dot{t}^{-1} c_m^{-1} \in \overline{B}$
for all $m\geq \bL$.
Clearly $a_m \dot{s} b_m d_m^{-1} \dot{t}^{-1} c_m^{-1} \in G$.
It is shown ${\bf (P_2)}$ that $\overline{B}\cap G=B$, thus,
$a_m \dot{s} b_m d_m^{-1} \dot{t}^{-1} c_m^{-1} \in B$
for all $m\geq \bL$.
On the other hand,  these elements $a_m \dot{s} b_m d_m^{-1} \dot{t}^{-1} c_m^{-1}$
lie in $B\dot{s}B \dot{t}B$ equal
to the double coset $B\dot{s}\dot{t}B$ since
$l(\dot{s}\dot{t})=2=l(\dot{s})+l(\dot{t})$. 
%$\overline{P_s}=\overline{L_s\ltimes U_s} = L_s \overline{U_s}$
%since $U_s$ is open and $L_s$ is consequently discrete.
%Using Bruhat decomposition for the BN-pair on $L_s$,
%$$
%\overline{P_s}=
%[(L_s\cup B)\dot{\cup} (L_s\cup B)\dot{s}(L_s\cup B)] \overline{U_s}
%=
%\o{B}\dot{\cup} (L_s\cup B)\dot{s}\overline{B}.
%$$
Since $B\dot{s}B \dot{t}B\cap B = \emptyset$, 
no such $x$ exists. 
Therefore, $\overline{P_s}\cap \overline{P_t} = \overline{B}$.

${\bf (P_3)}$ The minimal parabolic $P_s$ is a union of cosets of $B$,
hence, open in $G$.
Similarly to the proof in ${\bf (P_2)}$,
$N_s = \overline{P_s}\cap N$ is equal to
$(\overline{P_s}\cap G) \cap N = P_s\cap N$.
Therefore, $N_s/(\overline{B}\cap N)$ is of order 2 for all $s\in S$.

${\bf (P_4)}$
By condition (2), $B$ has finite index in $P_s$. Hence
$B\dot{s} B = XB$ for some finite subset $X\subseteq P_s$.
Observe now that
$$
\overline{B}\dot{s} \overline{B} \subseteq \overline{B\dot{s} B} =
\overline{XB} \stackrel{f}{\supseteq} X \overline{B}
\subseteq \overline{B}\dot{s} \overline{B}. $$
Since $X$ is finite, $X\overline{B}$ is closed and the inclusion $f$ is an equality.
Thus, $\overline{B\dot{s}B}=\overline{B}\dot{s} \overline{B}$.
Therefore, 
$\overline{P_s} = \overline{B}\cup \overline{B\dot{s}B} =
\overline{B} \cup \overline{B}\dot{s} \overline{B}$.
%(Observe that the union is disjoint by ${\bf (P_3)}$.)

${\bf (P_5)}$ We have proved in ${\bf (P_2)}$ that $\overline{B}\cap N = B\cap N$.
Therefore, $(N/\overline{B}\cap N,S) =(W ,S)$ is a Coxeter group.

${\bf (P_6)}$ Let us first observe that if $A,K\leq G$ are open subgroups,
then $\overline{A}\cap\overline{K}=\overline{A\cap K}$ in $\widehat{G}$.
Indeed, the inclusion $\supseteq$ is obvious. To prove the inclusion $\subseteq$ 
consider $x\in\overline{A}\cap\overline{K}$.
%An element
%$x\in \overline{B\dot{s}B} \cap \overline{B\dot{t}B}$
It is a limit of two nets
$$
x = \lim_{m\in \bM} a_m  = \lim_{m\in \bM} k_m\; , \ \ \ a_m\in A,\ k_m \in K. 
$$
Then the net $a_mk_m^{-1}=(a_mx^{-1})(xk_m^{-1})$ converges to $1\in G$.
Since $A\cap K$ is open there exists an ordinal $\bL<\bM$ such that
$a_mk_m^{-1}\in A\cap K$, and consequently
$a_m,k_m\in A\cap K$
for all $m\geq \bL$.
Thus, $x\in\overline{A\cap K}$.

The following subgroups are defined recursively
for a reduced word \newline
$s_1s_2\cdots s_t\in W$
and its fixed lift $n=\dot{s}_1\dot{s}_2\cdots \dot{s}_t\in N$
$$
%B(1)\coloneqq B, \; \overline{B}(1)\coloneqq\overline{B}, \; 
B(s_1, \ldots , s_{i})\coloneqq B\cap \dot{s}_i^{-1} B(s_1,  \ldots , s_{i-1})\dot{s}_i,
$$
\begin{equation} \label{sub_n}
\overline{B}(s_1,  \ldots , s_{i})\coloneqq \overline{B}\cap \dot{s}_i^{-1}
\overline{B}(s_1, \ldots ,s_{i-1})\dot{s}_i.
\end{equation} 
The aforementioned observation implies that
$\overline{B}(s_1, \ldots, s_{t})= \overline{B(s_1, \ldots ,s_{t})}$.
Consequently, the homomorphism
\begin{equation} \label{hom_n}
\overline{\gamma}(\dot{s}_1, \ldots, \dot{s}_{t}) :
\overline{B}(s_1, \ldots ,s_{t}) \rightarrow \overline{B}, \ \
x\mapsto \dot{s}_1 \cdots \dot{s}_{t} x \dot{s}_t^{-1} \cdots \dot{s}_{1}^{-1}
\end{equation}
is uniquely determined by its restriction
$\gamma(\dot{s}_1, \ldots, \dot{s}_{t}) :
B(s_1, \ldots ,s_{t}) \rightarrow B$. 
%The properties of BN-pair
Property ${\bf (P_6)}$ hold for $\fA$.
This means that 
$B(s_1, \ldots ,s_{t})$ 
depends only
on the element $w=s_1 \cdots s_t \in W$, not the word or the choice
of the liftings $\dot{s}_i\in N$.
It also means that
$\gamma(\dot{s}_1,  \ldots, \dot{s}_{t})$ depends only
on the element $n=\dot{s}_1 \cdots \dot{s}_t \in N$.
Therefore, the subgroup
$\overline{B}(s_1, \ldots ,s_{t})$ and the homomorphism
$\overline{\gamma}(\dot{s}_1,  \ldots, \dot{s}_{t})$ depend only
on $w \in W$ and $n\in N$ correspondingly.
(We denote these $B_w, \overline{B}_w, \gamma_n, \overline{\gamma}_n$.)

${\bf (P_7)}$ We begin by proving that all subgroups $B_w$, $w\in W$
are commensurable. We proceed by induction on the length $l(w)$
to show that $B_w$ has finite index in $B$.

If $l(w)=1$,
then $w=s$ for some $s\in S$.
Since $xB_s \mapsto x\dot{s}B$ is an embedding of quotient sets
$B/B_s \hookrightarrow P_s/B$, we conclude that 
$|B:B_s| \leq |P_s:B| < \infty$ by assumption (2).

Suppose the case of $l(w)=m-1$ is settled.
Consider $w\in W$, $s\in S$  with $l(ws)=m$.
%There exists $s\in S$ such that $l(ws)=n-1$ so that
Then
$
B_{ws} = B \cap \dot{s}^{-1}B_{w}\,\dot{s}
$. 
Hence,
$$
|B:B_{ws}| = |B:B_s||(B\cap \dot{s}^{-1}B\dot{s}):(B\cap \dot{s}^{-1}B_{w}\dot{s})|
\leq
|B:B_s||B:B_{w}| <\infty
$$
since by induction assumption $|B:B_{w}|<\infty$.

Property ${\bf (P_7)}$ for $\fA$ ensures that
%The properties of BN-pair ensure that
$B_{w}B_s=B$ if $l(ws)=l(w)+1$ as before.
Hence,
$$
\overline{B} = \overline{B_{w}B_s}
\supseteq
\overline{B_{w}}\; \overline{B_s}
=
\overline{B}_{w}\, \overline{B}_s 
$$
because $\overline{XY}\supseteq\overline{X}\;\overline{Y}$
for all subsets $X,Y$ and
$\overline{B_w} = \overline{B}_w$ for all $w\in W$
as shown in  ${\bf (P_6)}$.  
Since $B_w$ and $B_s$ are commensurable, $B_w B_s = X B_s$
for a finite subset $X\subseteq B_w$.
Then
$$
\overline{B_{w}B_s}=\overline{XB_s} = X \overline{B_s}
\subseteq \overline{B}_{w}\,\overline{B}_s
$$
since $X \overline{B_s}$ is closed as a finite union of closed cosets of $\overline{B_s}$.
Therefore,
$
\overline{B}_{w}\, \overline{B}_s
= \overline{B}$.

${\bf (P_8)}$
Let $s,t\in S$, $w\in W$ such that
$wtw^{-1}=s$ and $l(wt)=l(w)+1$. 
%If $\pi:N\rightarrow W$ is the natural surjection,
Let us fix arbitrary $x\in\pi^{-1}(s)$, $n\in\pi^{-1}(w)$
and define $x^\prime \coloneqq n^{-1}x^{-1}n\in\pi^{-1}(t)$.
Now pick any $b\in \overline{B}\setminus\overline{B}_t$.
As shown in ${\bf (P_7)}$, $B=X B_t$ for a finite set $X$.
Without loss of generality, $1\in X$ and
$B\setminus B_t=(X\setminus\{1\}) B_t$.
By the argument as above ($X\overline{B}_t$ is closed etc.),
$\overline{B}\setminus \overline{B}_t=(X\setminus\{1\}) \overline{B}_t$.
Hence, $b=b_1b_2$ for some $b_1\in X\setminus\{1\}$
and $b_2\in \overline{B}_t$. This brings property
${\bf (P_8)}$ down to the system $\fA$ %the group $G$ from $\widetilde{G}$
where
we know it \cite{kn::T}. %\marginpar{Why? Check Tits}
Therefore, there exist elements
$y\in b_1B_t\cap B_w \subseteq b\overline{B}_t \cap \overline{B}_w$
and
$y^\prime,y^{\sharp}\in B_w \subseteq \overline{B}_w$
satisfying
$x^{\prime\, -1}yx^\prime =y^\prime x^\prime y^{\sharp} \in
P_t \subseteq \overline{P_t}$
and
$x \gamma_n (y) x^{-1} = \gamma_n (y^\prime) x^{-1} \gamma_n(y^\sharp)
\in P_s \subseteq \overline{P_s}$.

${\bf (P_9)}$
Recall that $\overline{B}\cap G =B$ and $\overline{P_s}\cap G = P_s$
as shown in ${\bf (P_2)}$.
If $\overline{B}$ were normal in $\overline{P_s}$, then
$B$ would be normal in $P_s$, contradicting ${\bf (P_9)}$ for $\fA$. 
%which is not possible in a group with a BN-pair. 
Therefore, $\overline{B}$ is not normal in $\overline{P_s}$.
\end{proof}

A shortcoming of Theorem~\ref{complete_1}
is that it requires the group $\widehat{G}$ to exist first.
It would be useful to tweak %modify
the theorem to enable construction of new groups. The next theorem
addresses this issue. If $\mT$ is a topology on a group $B$,
we denote $\mT_1 \coloneqq \{ A\in \mT \,\mid\, 1\in A\}$. 
\begin{theorem}
  \label{complete_2}
Let $G$ be a group with a BN-pair $(B,N)$
with the Weyl group $(W,S)$ where $S$ is finite. 
Suppose further that a topology $\mT$ on $B$ is given
such that
the four conditions ($1$)--($4$)
%or the five conditions ($1$)--($4$) and ($5^\dagger$)
hold.
\begin{enumerate}
%\begin{mylist}
\item $(B,\mT)$ is a topological group.
\item The completion $\widehat{B}$ is a group.
\item $\mT_1$ is a basis at $1$ of topology on each minimal parabolic
  $P_s$, $s\in S$ that defines a structure of topological group on $P_s$.
\item The index $|P_s:B|$ is finite for each $s\in S$. 
%  is split as a sIf $\overline{U_s}$ is a$\widehat{B}$ be the
%then  $(\overline{B},N)$ is a BN-pair on the completed group $\widehat{G}$.
%  subgroups $\fA=\{\overline{B},N,\overline{P_s}; s\in S\}$.
%Let $\widetilde{G}= \underset{\fA}{\ast}\;H$ be its amalgam.
%emidirect
%  product $P_s=L_s\ltimes U_s$ where $U_s$ is an open subgroup of $B$.
%\item{($3$)}   $L_s$ is a finite(??) group  with rank 1 BN-pair $(B\cap L_s, N\cap L_s)$.
%\item{($4$)} If $s,t\in S$ are conjugate in $W$, then
%there exists $g\in G$ such that $L_s=gL_tg^{-1}$ and
%$P_s=gP_tg^{-1}$.
%\item{($5$)} Action of each $L_s$ on $U_s$ is continuous.
%\item{($5^\dagger$)} A basis $\mJ$ of topology on $B$ consists of subgroups $H$ such that
%$H\cup U_i$ have finite index in $U_i$ for each $i$. 
%\end{mylist}
\end{enumerate}
%If $\overline{U_s}$ is the closure  of $U_s$ in $\widehat{B}$, 
%then $\fA=\{\widehat{B},N,L_s\ltimes\overline{U_s}; s\in S\}$
%is a system of subgroups (cf.  \cite[Def. 5.1.6]{Kum})
%whose amalgam $\underset{\fA}{\ast}\;H$ is a topological group
%with BN-pair $(\widehat{B},N)$.
Under these conditions the following statements hold:  
%If $\overline{U_s}$ is the closure  of $U_s$ in $\widehat{B}$, 
%then $\fA=\{\widehat{B},N,L_s\ltimes\overline{U_s}; s\in S\}$
%is a system of subgroups (cf.  \cite[Def. 5.1.6]{Kum})
%whose amalgam $\underset{\fA}{\ast}\;H$ is a topological group
%with BN-pair $(\widehat{B},N)$.
\begin{enumerate}
%\begin{mylist}
\item[(a)]
  $\mT_1$ is a basis at $1$ of topology on $G$
  that defines a structure of topological group on $G$.
\item[(b)]  The completion $\widehat{G}$ is a group and $\widehat{B}=\overline{B}^{\subseteq\widehat{G}}$.
\item[(c)] The completion $\widehat{G}$ is isomorphic
  to the amalgam $\underset{\fB}{\ast}\;H$
  where \newline $\fB=\{\overline{B},N,\overline{P_s}; s\in S\}$.
\item[(d)] The pair
$(\overline{B},N)$ is a BN-pair on the completed group $\widehat{G}$.
\end{enumerate}
%\end{mylist}
\end{theorem}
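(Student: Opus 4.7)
The plan is to prove (a)--(d) in order, using (a) and (b) to bring the setup under the hypotheses of Theorem~\ref{complete_1}, from which (c) and (d) will follow. For (a) I will verify that $\mT_1$ is a neighbourhood basis at $1$ of a topological group topology on $G$, which amounts to the three axioms: (i) $\mT_1$ is closed under pairwise intersection; (ii) for each $U\in\mT_1$ there is $V\in\mT_1$ with $VV\subseteq U$ and $V^{-1}\subseteq U$; and (iii) for each $g\in G$ and $U\in\mT_1$ there is $V\in\mT_1$ with $gVg^{-1}\subseteq U$. Items (i) and (ii) are immediate from condition~(1). For (iii), observe $G=\langle B,\dot{s}\mid s\in S\rangle=\langle\bigcup_{s\in S}P_s\rangle$, so every $g\in G$ is a finite word in $\bigcup_sP_s$; by iterating, it suffices to treat $g\in P_s$, which is exactly condition~(3). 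Since $B\in\mT_1$, the subgroup $B$ is open in this topology and inherits $\mT$.

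For (b), by the Bourbaki criterion recalled at the opening of the section, $\widehat{G}$ is a group iff inversion preserves right-uniformity Cauchy filters. Given a Cauchy filter $\mF$ on $G$, openness of $B$ supplies $A\in\mF$ with $AA^{-1}\subseteq B$; fixing $a\in A$ gives $A\subseteq Ba$, so $Ba\in\mF$. Right-translation by $a^{-1}$ is an isometry of the right uniformity, so $\mF a^{-1}$ is Cauchy and contains $B$; restricted to $B$ it is a Cauchy filter on the open subgroup $B$ in its intrinsic right uniformity (which agrees with the restricted one since $B$ is open). By condition~(2), inverting this filter yields a Cauchy filter on $B$ whose witnessing sets have the form $aY^{-1}\cap B=a(Y\cap Ba)^{-1}$ for $Y\in\mF$, with $Y\cap Ba\in\mF$; they therefore lie in $a\cdot\mbox{Inv}(\mF)$, showing that $a\cdot\mbox{Inv}(\mF)$ is Cauchy on $G$. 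To pass from $a\cdot\mbox{Inv}(\mF)$ to $\mbox{Inv}(\mF)$ I use the conjugation-continuity from (a): for each $V\in\mT_1$ there is $V'\in\mT_1$ with $V'\subseteq aVa^{-1}$, so a Cauchy witness for $a\cdot\mbox{Inv}(\mF)$ at $V'$ produces one for $\mbox{Inv}(\mF)$ at $V$. Hence $\widehat{G}$ is a group, and $\widehat B=\overline B$ by \cite[Cor.~II.3.9(1)]{kn::Bou} applied to the open subgroup $B$.

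At this point the hypotheses of Theorem~\ref{complete_1} are all verified: its (1) is (b), its (2) is our assumption~(4), and its (3) is the openness of $B$ from (a). This gives (d) at once. Moreover, the proof of Theorem~\ref{complete_1} produces $\widehat{G}$ as an amalgam, showing that the canonical homomorphism $\widetilde{G}=\underset{\fB}{\ast}H\to\widehat{G}$ is both injective and surjective, hence an isomorphism; this is (c). The principal obstacle is the filter-theoretic work in (b): one must carefully transport Cauchy filters between $G$ and the open subgroup $B$, and, because left multiplication is not uniformly continuous for the right uniformity, compensate using the conjugation-continuity established in (a).
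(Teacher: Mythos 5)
Your proposal is correct and follows essentially the same route as the paper: part (a) by reducing conjugation-continuity to the generators $P_s$, part (b) by using openness of $B$ to translate a Cauchy object into $B$ and invoke condition (2) there, and parts (c), (d) by feeding the result into Theorem~\ref{complete_1}. The only difference is cosmetic: the paper runs the part (b) argument with a convergent net $x_m$ translated to $y_m=x_mx_{\bL}^{-1}\in B$ and exhibits $x_{\bL}^{-1}y^{-1}$ as an explicit inverse, whereas you verify Bourbaki's filter-inversion criterion, compensating for the non-uniform continuity of left translation with the conjugation estimate from (a) -- both are the same reduction to the open subgroup $B$.
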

\begin{proof}
  {\bf (a)} We already know that $\mT_1$ is a filter of neighbourhoods of $1$
  in a topological group $B$. To verify (a) it suffices to show
  that for all $g\in G$, $A\in \mT_1$
  it holds
  %there exist
  %$V\in\mT_1$ such
  that $gAg^{-1}\in \mT_1$
  \cite[Prop. III.1.2(1)]{kn::Bou}. By (3) we know this property for all $g\in P_s$.
  Since $G$ is generated by all $P_s$, we conclude the proof.

  {\bf (b)} Denote the aforementioned topology on $G$ by $\mT_G$.
  We need to show that the monoid $\widehat{(G,\mT_G)}$ is a group.
  Consider  $x\in \widehat{(G,\mT_G)}$ and a convergent net $x_m\longrightarrow x$,
  $m\in \bM$, $x_m \in G$. Since $B\in \mT_1$, $B$ is open in $(G,\mT_G)$.
  The net $x_m$ is Cauchy, so
  there exists an ordinal $\bL$ such that $x_mx_\bL^{-1}\in B$ for all $m\geq \bL$.
  Let
  $$
  y_m=\begin{cases} 
  1 & \mbox{ if } \  m< \bL,   \\
  %(x_\bLx_m^{-1})^{-1}=
  x_mx_\bL^{-1} & \mbox{ if } \  m\geq \bL.  
  \end{cases}
  $$
  Since $y_my_l^{-1}=(x_mx_\bL^{-1})(x_lx_\bL^{-1})^{-1}=x_mx_l^{-1}$,
  the net $y_m$ is a Cauchy net in $B$. Let $y=\lim y_m\in\widehat{B}$. The
  inverse $y^{-1}\in\widehat{B}$ exists because $\widehat{B}$ is a group. Then
  $$
  (x_\bL^{-1} y^{-1})\cdot x = x_\bL^{-1} \cdot \lim_m y_m^{-1} \cdot \lim x_m =
  x_\bL^{-1} \cdot \lim_m y_m^{-1}x_m = 
  x_\bL^{-1} \cdot \lim_m x_\bL = 1.  
  $$
  Similarly, $x\cdot (x_\bL^{-1} y^{-1})$. Thus, we have found the inverse
  $x^{-1}= x_\bL^{-1} y^{-1})\in\widehat{G}$ so that $\widehat{B}$ is a group.
Coincidence of the completion and the closure is standard. 
  
  {\bf (c+d)} These follow immediately from Theorem~\ref{complete_1}.
\end{proof}

Suppose that a group $G$ admits two topological group structures
$(G,\mS)$ and $(G,\mT)$ such that $\mS\subseteq \mT$.
Then the identity map $\Id : (G,\mT) \rightarrow (G,\mS)$
is a homomorphism of topological groups
that admits a unique extension
$\widehat{\Id} : \widehat{(G,\mT)} \rightarrow \widehat{(G,\mS)}$
\cite[Prop. III.3.4(8)]{kn::Bou}. 
This extension $\widehat{\Id}$ may or may not be injective in general.
Ditto for surjective
\cite[Exercise III.3(12)]{kn::Bou}.
However, we can give nice criteria for surjectivity and injectivity
for the topologies we are interested in.

\begin{cor}
  \label{injective}
  Consider a group $G$ with a BN-pair $(B,N)$
  that admits two topological group structures
  $(G,\mS)$ and $(G,\mT)$ such that $\mS\subseteq \mT$.
  Suppose
%$(G,\mT)$ satisfies the conditions of Theorem~\ref{complete_1}
%or Theorem~\ref{complete_2}.
$B\in \mS$. 
%  If $\;\widehat{\Id_B} : \widehat{(B,\mT_B)} \rightarrow \widehat{(B,\mS_B)}$
%  is injective, then 
%  $\widehat{\Id} : \widehat{(G,\mT)} \rightarrow \widehat{(G,\mS)}$
%  is injective.
  Then the kernel of $\widehat{\Id} : \widehat{(G,\mT)} \rightarrow \widehat{(G,\mS)}$
  is equal to the kernel of 
  $\;\widehat{\Id_B} : \widehat{(B,\mT_B)} \rightarrow \widehat{(B,\mS_B)}$.
\end{cor}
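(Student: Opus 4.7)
The plan is to prove the two inclusions separately, and the key tool is that $B$ is open in both topologies: since $B\in\mS\subseteq\mT$, it is open in $(G,\mS)$ and hence also in $(G,\mT)$. By \cite[Cor. II.3.9(1)]{kn::Bou} (already invoked in Chapter~1), the canonical maps $\widehat{(B,\mT_B)}\hookrightarrow\widehat{(G,\mT)}$ and $\widehat{(B,\mS_B)}\hookrightarrow\widehat{(G,\mS)}$ identify the completions of $B$ with the closures of $B$ in the ambient completions of $G$. Functoriality of completion then yields a commutative square in which $\widehat{\Id_B}$ is literally the restriction of $\widehat{\Id}$ to $\widehat{(B,\mT_B)}\subseteq\widehat{(G,\mT)}$. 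The inclusion $\ker(\widehat{\Id_B})\subseteq\ker(\widehat{\Id})$ is immediate from this.

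For the reverse inclusion $\ker(\widehat{\Id})\subseteq\ker(\widehat{\Id_B})$, I would fix $x\in\ker(\widehat{\Id})$ and represent it as the limit in $\widehat{(G,\mT)}$ of some $\mT$-Cauchy net $(x_m)_{m\in\bM}$ in $G$. The condition $\widehat{\Id}(x)=1$ is exactly the statement that $x_m\to 1$ in the coarser topology $\mS$, so using that $B$ is an $\mS$-open neighbourhood of $1$ there is an index $m_0\in\bM$ with $x_m\in B$ for all $m\geq m_0$. The truncated net is still $\mT$-Cauchy and has the same limit $x\in\widehat{(G,\mT)}$. Because $B$ is $\mT$-open, a net in $B$ is $\mT_B\ds$-Cauchy precisely when it is $\mT\ds$-Cauchy, so the tail defines an element $\tilde{x}\in\widehat{(B,\mT_B)}$ whose image under the canonical embedding is $x$. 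Chasing the commutative square now gives $\widehat{\Id_B}(\tilde{x})=\widehat{\Id}(x)=1$, i.e.\ $x\in\ker(\widehat{\Id_B})$.

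The main (if modest) obstacle is the extraction of a Cauchy tail that actually lies in $B$; this is where the hypothesis $B\in\mS$ is essential, as it is an $\mS$-neighbourhood of $1$ into which the convergence $x_m\to 1$ in $\mS$ forces the tail. The remaining bookkeeping---compatibility of $\widehat{\Id}$ with the embedding $\widehat{B}\hookrightarrow\widehat{G}$, and the identification of $\mT\ds$-Cauchy nets in $B$ with $\mT_B\ds$-Cauchy nets---is routine and was implicitly used already in the proof of Theorem~\ref{complete_1}.
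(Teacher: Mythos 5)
Your proposal is correct and follows essentially the same route as the paper: both arguments take $x\in\ker(\widehat{\Id})$, observe that the representing $\mT$-Cauchy net converges to $1$ in $\mS$, use $B\in\mS$ to force a tail of the net into $B$, and conclude $x\in\widehat{(B,\mT_B)}$ and hence $x\in\ker(\widehat{\Id_B})$. The extra bookkeeping you spell out (the identification $\widehat{(B,\mT_B)}\cong\overline{B}\subseteq\widehat{(G,\mT)}$ and the commutative square) is left implicit in the paper but is exactly the intended justification.
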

\begin{proof}
  Clearly, $\ker (\widehat{\Id})\supseteq\ker (\widehat{\Id_B})$.
  In the opposite direction,  
  consider  $x\in\ker (\widehat{\Id})$. This element is a limit
  of a Cauchy net $x_m\in G,\; m\in \bM$ in $\mT$ such that
  $x_m \longrightarrow 1$ in $\mS$. 
  Since $B\in\mS$ there exists an ordinal $\bL<\bM$ such that
  $x_m\in B$ for all $m\geq \bL$.
  Thus, $x\in \widehat{(B,\mT_B)}$ and  $x\in \ker (\widehat{\Id_B})$.
\end{proof}

\begin{cor}
  \label{surjective}
  Consider a group $G$ with a BN-pair $(B,N)$
  that admits two topological group structures
  $(G,\mS)$ and $(G,\mT)$ such that $\mS\subseteq \mT$.
  Suppose $(G,\mS)$ satisfies the conditions of Theorem~\ref{complete_1}
  or Theorem~\ref{complete_2}.
  If $\;\widehat{\Id_B} : \widehat{(B,\mT_B)} \rightarrow \widehat{(B,\mS_B)}$
  is surjective, then 
  $\widehat{\Id} : \widehat{(G,\mT)} \rightarrow \widehat{(G,\mS)}$
  is surjective.
\end{cor}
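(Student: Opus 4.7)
The plan is to exploit the fact that the hypotheses equip $\widehat{(G,\mS)}$ with a BN-pair $(\overline{B}^{\mS},N)$ via Theorem~\ref{complete_1} or Theorem~\ref{complete_2}, so that the abstract subgroup generated by $\overline{B}^{\mS}\cup N$ already exhausts $\widehat{(G,\mS)}$. Surjectivity of $\widehat{\Id}$ therefore reduces to checking that its image contains both $N$ and $\overline{B}^{\mS}$. Containment of $N$ is immediate, as $N\subseteq G\subseteq \widehat{(G,\mT)}$ and $\widehat{\Id}$ restricts to the identity on $G$.

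For the harder containment, note first that both Theorem~\ref{complete_1} and Theorem~\ref{complete_2} force $B$ to be open, hence clopen, in $(G,\mS)$ (explicit in the former; given by assertion (a) of the latter). Consequently the subspace uniformity on $B$ inherited from $(G,\mS)$ agrees with its intrinsic group uniformity $\mS_B$, and \cite[Cor.~II.3.9(1)]{kn::Bou} identifies $\widehat{(B,\mS_B)}$ with $\overline{B}^{\mS}$. The inclusion $B\hookrightarrow G$ is a continuous homomorphism of topological groups for either topology, hence uniformly continuous, so functoriality of the completion furnishes the commutative diagram
\[
\begin{CD}
\widehat{(B,\mT_B)} @>{j_\mT}>> \widehat{(G,\mT)} \\
@V{\widehat{\Id_B}}VV @VV{\widehat{\Id}}V \\
\widehat{(B,\mS_B)} @>{j_\mS}>> \widehat{(G,\mS)}
\end{CD}
\]
whose bottom arrow has image exactly $\overline{B}^{\mS}$. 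Applying the hypothesised surjectivity of $\widehat{\Id_B}$, a diagram chase yields
\[
\overline{B}^{\mS}=j_\mS\bigl(\widehat{\Id_B}(\widehat{(B,\mT_B)})\bigr)=\widehat{\Id}\bigl(j_\mT(\widehat{(B,\mT_B)})\bigr)\subseteq \widehat{\Id}\bigl(\widehat{(G,\mT)}\bigr),
\]
as required.

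The only genuine subtleties are the identification $\widehat{(B,\mS_B)}\cong\overline{B}^{\mS}$ and the well-definedness of $j_\mT$; both rest on openness of $B$ in $(G,\mS)$ together with Bourbaki's completion machinery. Once these formalities are dispatched, the conclusion is a purely formal consequence of the BN-pair axiom that $B$ and $N$ generate the ambient group.
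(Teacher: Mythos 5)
Your proof is correct and follows exactly the route of the paper, whose entire proof is the single sentence that $\widehat{(G,\mS)}$ is generated by $N$ and $\overline{B}$; you have merely filled in the routine details (the identification $\widehat{(B,\mS_B)}\cong\overline{B}^{\mS}$ via openness of $B$ and the commutative square from functoriality of completion). No further comment is needed.
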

\begin{proof}
  This holds because $\widehat{(G,\mS)}$ is generated by $N$
  and $\overline{B}$.
\end{proof}

Surjectivity of $\widehat{\Id}$ has a very interesting consequence as pointed out to us by Guy Rousseau.
Note  that the map
$\widehat{\Id} : \widehat{(G,\mT)} \rightarrow \widehat{(G,\mS)}$
defines an injective map of Tits buildings 
$\mBT (\widehat{(G,\mT)})\rightarrow \mBT(\widehat{(G,\mS)})$.
Surjectivity of $\widehat{\Id}$ implies that this map of Tits buildings is bijective.

We finish this section with a convenient corollary
of Theorem~\ref{complete_2} whose proof is straightforward.
%Should we discuss the following??
%
%Suppose further that
%each $L_i$ is finite
%and the basis $\mJ$ consists of subgroups $H$ such that
%$H\cup U_i$ have finite index in $U_i$ for each $i$. 
%Pro-p-completion

\begin{cor}
  \label{complete_3}
Let $G$ be a group with a BN-pair $(B,N)$
with the Weyl group $(W,S)$ where $S$ is finite. 
Suppose further that a system $\fS$ of subgroups of $B$ is given
such that
the following three conditions hold.
\begin{enumerate}
\item $\fS$ forms a topology basis at $1$ of 
a topological group $(B,\mT)$.  
\item Each minimal parabolic $P_s$
    is split as a semidirect
    product $P_s=L_s\ltimes U_s$ where $L_s$ is a finite group and $U_s$
    is a subgroup of $B$.
\item   $L_s$ acts continuously on $(U_s,\mT_{U_s})$.
\end{enumerate}
Then the four conclusions of Theorem~\ref{complete_2} hold.
%whose proof is straightforward.
%If $\overline{U_s}$ is the closure  of $U_s$ in $\widehat{B}$, 
%then  
%$\fA=\{\widehat{B},N,L_s\ltimes\overline{U_s}; s\in S\}$
%is a system of subgroups (cf.  \cite[Def. 5.1.6]{Kum})
%whose amalgam $\underset{\fA}{\ast}\;H$ is a topological group
%with BN-pair
%$(\widehat{B},N)$.
\end{cor}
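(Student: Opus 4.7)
My plan is to reduce Corollary~\ref{complete_3} to Theorem~\ref{complete_2} by checking that its three conditions together imply all four hypotheses of that theorem. Once this is done, the conclusions (a)--(d) of Theorem~\ref{complete_2} are word-for-word the four conclusions of the corollary.

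Two of the hypotheses drop out immediately. Hypothesis (1) of Theorem~\ref{complete_2} is literally condition (1) of the corollary, and hypothesis (4) is a short index count: the semidirect decomposition in condition (2) together with $U_s\leq B$ gives $|P_s:B|\leq|P_s:U_s|=|L_s|<\infty$.

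For hypothesis (2) --- that $\widehat{B}$ is a group --- I would exploit the fact that $\fS$ consists of \emph{subgroups}, not merely open sets. Any right-Cauchy filter $\mF$ on $B$ contains, for each $V\in\fS$, a set $A$ with $AA^{-1}\subseteq V$, so $A$ lies in a single left coset of $V$. Using $V=V^{-1}$ together with continuity of conjugation in the topological group $(B,\mT)$ (part of condition (1)), one checks that the sets $A^{-1}$ form a right-Cauchy filter as well; Bourbaki's criterion \cite[Th. III.3.4(1)]{kn::Bou} then lifts $\widehat{B}$ from a monoid to a group.

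The main obstacle will be hypothesis (3), the construction of a topological group structure on $P_s$ with $\mT_1$ as a basis at $1$. The natural candidate is the semidirect-product topology suggested by condition (2): topologize $L_s$ discretely and $U_s$ with its subspace topology $\mT_{U_s}$. Continuity of multiplication on $P_s$ then decomposes via the semidirect product formula into continuity of the operations of $(B,\mT)$ (by condition (1)) together with continuity of the $L_s$-action on $U_s$ (condition (3)), while inversion is handled by the same data. The delicate point will be to verify that $B$ is open in this topology on $P_s$ and that the subspace topology it induces on $B$ coincides with $\mT$, so that $\mT_1$ genuinely forms a basis at $1$ of $P_s$ rather than merely of the subgroup $U_s$.
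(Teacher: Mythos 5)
The paper gives no argument for this corollary (it is dismissed as ``straightforward''), and your overall strategy---verifying hypotheses (1)--(4) of Theorem~\ref{complete_2}---is certainly the intended one; your treatment of hypotheses (1) and (4) is correct. The genuine gap is in your verification of hypothesis (2). It is \emph{not} true that a Hausdorff topological group whose topology has a basis at $1$ of open subgroups has a one-sided completion that is a group: the group $\mathrm{Sym}(\bN)$ of all permutations of $\bN$ with the topology of pointwise convergence has the pointwise stabilisers of finite sets---open subgroups, closed under the conjugation needed for a topological group---as a basis at $1$, yet its one-sided completion is the monoid of injective self-maps of $\bN$. Your appeal to continuity of conjugation only controls $a^{-1}Wa$ for a \emph{fixed} $a$; to show that $\mathrm{Inv}(\mF)$ is Cauchy for the right uniformity you need, for a given $V\in\fS$, a single $W\in\fS$ with $a^{-1}Wa\subseteq V$ for \emph{all} $a$ ranging over some member of the filter, and such members are in general unbounded, so the argument is circular exactly where you wave at ``one checks''. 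The statement can be rescued if the members of $\fS$ have finite index in $B$ (as in the paper's application in Section 2): then each $V\in\fS$ contains its core $\bigcap_{b\in B}bVb^{-1}$, which is a finite intersection of open subgroups and hence an open \emph{normal} finite-index subgroup, the two uniformities coincide, and $\widehat{B}\cong\varprojlim B/N$ is a group. But neither finite index nor normality is among the stated hypotheses, so you must either supply such an additional input or treat hypothesis (2) as an assumption rather than a consequence.

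A smaller but real issue concerns hypothesis (3). Both your semidirect-product topology and the direct verification of Bourbaki's axiom that conjugation by each $g\in P_s$ carries some member of $\mT_1$ into a prescribed one reduce, for $g\in L_s$, to condition (3) of the corollary; but continuity of the $L_s$-action on $(U_s,\mT_{U_s})$ only produces neighbourhoods of the form $W\cap U_s$, and to promote these to members of $\mT_1$ you need $U_s$ to be open in $(B,\mT)$, equivalently $\fS$ to have a cofinal subfamily of subgroups contained in $U_s$. You correctly flag this as ``the delicate point'' but do not resolve it, and it does not follow formally from the three stated conditions; in the Kac--Moody application it holds because $|U:U_s|$ is a power of $p$, so that $A\cap U_s\in\mF$ for every $A\in\mF$.
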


\section{Completions of Kac-Moody Groups}

Let $A=(a_{ij})_{n\times n}$ be a generalised Cartan matrix,
$\mD=(I, A, \mX,\mY,\Pi,\Pi^\vee )$  a root datum of type $A$.
Recall that this means 
\begin{itemize}
\item $I=\{1, 2, ... , n\}$,
\item $\mY$ is a free finitely generated abelian group,
\item $\mX=\mY^\ast =\hom (\mY,\bZ)$  is its dual  group,
\item $\Pi=\{\alpha_1, \ldots \alpha_n\}$ is a set of simple roots,
where $\alpha_i \in \mX$,
\item $\Pi^\vee =\{\alpha^\vee_1, \ldots \alpha^\vee_n\}$ is a set of  
 simple coroots,
where $\alpha^\vee_i \in \mY$,
\item for all $i,j\in I$,  
$\alpha_j (\alpha^\vee_i ) = a_{ij}$.
\end{itemize}

Recall that $\mD$ is {\em simply connected} if $\Pi^\vee$ is a basis
of $\mY$. 
%(see Section~\ref{sc6} for an example of a simply connected
%affine group),
We call $A$  \linebreak 
$2$-spherical if  
 for each $J\subseteq I$ with $|J|=2$, the submatrix 
$A_J{\coloneqq}(a_{ij})_{i,j\in J}$ is a  Cartan matrix of finite type. 
Let $\Delta^{re}$ be the set of real roots. 
Recall that  $\Delta^{re}=W(\Pi)$ where $W$ is the Weyl group.
Note that
$\Delta^{re}=\Delta^{re}_+\cup \Delta^{re}_-$.  

 Let $\mathbb{F}=\bF_q$  be a finite field of $q=p^a$ elements ($a\geq 1$ and $p$ a prime). 
Tits \cite{T2,T3} gives a definition of a Kac-Moody group  $G{\coloneqq}G_\mD(\bF)$, 
%From Tits' definition an explicit presentation of these groups  was derived by Carter,  a presentation by the explicit set of generators and relations \`{a} la Steinberg (cf. \cite{kn::Car} p. 224). The presentation depends on the field and root datum, so the resulting group can be denoted $G_\mD(\bF)$.
%If $A$ is not of finite type (i.e., $A$ is not  a  Cartan matrix from classical Lie theory), the standard presentation is infinite:
 %the number of generators and the number of relations are both infinite.
which is generated by the torus $T=\hom (\mX,\bF^\times )$ and root subgroups
$X_{\alpha}\cong \mathbb{F}^+$,  $\alpha\in\Delta^{re}$.
For all $i\in I$, set 
$$M_i\coloneqq\langle X_{\alpha_i}\cup X_{-\alpha_i}\rangle.$$

The group $G$ admits a $BN$-pair $(B,N)$ 
%(in fact two $BN$-pairs $(B_+,N)$ and $(B_-,N)$) 
where $B=U\rtimes T$ with $U{\coloneqq}\langle X_{\alpha}\mid \alpha\in\Delta^{re}_+\rangle$ and $N{\coloneqq}N_G(T)$, the normaliser of $T$.  
If $\mD$ is simply connected, $T\cong (\bF^\times)^n$. 
Moreover, the Coxeter group $(N/T,S=\{s_i, i\in I \})$ and  $(W, S)$ are isomorphic.
We  denote by $P_i{\coloneqq}P_{s_i}$, $i\in I$, a minimal parabolic subgroup of $G$. 
It is known \cite[6.2]{kn::RCsimp} that $P_i=U_i\rtimes L_i$ where $L_i{\coloneqq}M_iT$ and 
$U_i=U\cap s_i Us_i^{-1}$.
In particular,
$|P_i:B|$ is finite for all $i\in I$.

Consider the set of subgroups $\mF$ of $B$ where
$$\mF=\{ A \,\mid\, A \trianglelefteq U \leq B, \; |U:A|=p^a\ \mbox{for some}\ a\in\bN\}.$$
Elements of $\mathcal{F}$ form a basis at $1$ of a topology on $B$. 
Then the completion $\widehat{B}$ of $B$ with respect to this topology is a group. 
Since $|P_s:U|<\infty$, conditions (1)--(4) of Theorem~\ref{complete_2} are satisfied,  thus its conclusions  hold.
In particular, $\widehat{G}$ is a topological group with an open subgroup $\widehat{B}$  and $(\widehat{B}, N)$ is a $BN$-pair of $\widehat{G}$.

Since $U$ is a residually finite-$p$ group \cite[Remark after Th.~4.1]{kn::E2} and
\linebreak $|B:U|\mid (q-1)^n$, 
our completion $\widehat{B}$ is equal to $\widehat{U}\rtimes T$ where $\widehat{U}$ is the full pro-$p$ completion of $U$.

Let us recall other known topological Kac-Moody groups.
They are the Mathieu-Rousseau group  $G^{ma+}$, the Carbone-Garland group $G^{c\lambda}$  and  the Caprace-R\'emy-Ronan group $G^{crr}$. Each of them contains
a quotient $G^{\dagger}\coloneqq G/Z$ by a central subgroup $Z$,
which depends on the completion and could be trivial. 
In fact, $G^{\dagger}$ is always dense in $G^{c\lambda}$ and $G^{crr}$. Let $\overline{G}^+$ be the closure of $G^{\dagger}$ in $G^{ma+}$. Rousseau \cite[6.10]{kn::Rou} investigates  whether $\overline{G}^+$ equals $G^{ma+}$ and show that this happens when 
 $p>max\{|a_{ij}|, i\neq j\}$  \cite[6.11]{kn::Rou}. Rousseau and later Marquis give examples when it does not happen \cite{kn::Rou}, \cite{Mar2}.

There are two further known completions of $G$ where the closure $\overline{U}$ is compact totally disconnected \cite{RWe}. {\em The Belyaev group}  $G^{b}$ is the ``largest'' such completion.   {\em The Schlichting group}  $G^{s}$ is the ``smallest'' such completion. Our completion admits a characterisation similar to {the Belyaev group}: $\widehat{G}$ is the ``largest'' completion where the closure $\overline{U}$ is a pro-$p$-group. 

Let $\overline{U}$ and $\overline{B}$ be the closures of $U$ and $B$ correspondingly  in either of the topological groups $\overline{G}^+$, $G^{c\lambda}$ or $G^{crr}$.
The group homomorphism $\widehat{U}\twoheadrightarrow\overline{U}$
extends to
 $\widehat{B}\twoheadrightarrow\overline{B}$ and  $\widehat{G}\twoheadrightarrow\overline{G}$ (cf. Section 6.3 of \cite{kn::Rou}).
Using this and the universal properties of the Belyaev and Schlichting completions, we have  open continuous homomorphisms \cite[6.3]{kn::Rou}:
\begin{equation} \label{homomorphisms}
G^{b}\twoheadrightarrow\widehat{G}\twoheadrightarrow \overline{G}^+\twoheadrightarrow G^{c\lambda}\twoheadrightarrow G^{crr}\xrightarrow{\cong} G^{s}.
\end{equation}

It is known that for $\overline{G}\in \{\overline{G}^+,G^{c\lambda},G^{crr}\}$, $\overline{G}/Z'(\overline{G})$ is topologically simple (where $Z'(\overline{G})=\bigcap_{g\in\overline{G}} g\overline{B}g^{-1}$ ).
What about our new group $\widehat{G}$?

Recall the following criterion of Bourbaki \cite{kn::Bou2}.

\begin{prop}
\label{prop::BBK}
Let $(G,B,N,S)$ be a Tits system with Weyl group $W = N/(B \cap N)$. Let $U$ be a subgroup of $B$. 
We set $Z'(G) = \bigcap_{g \in G} gBg^{-1}$. Assume that $G$ is a topological group topologically generated by the conjugates of $U$ in $G$. Assume further $B$ a closed subgroup of $G$, and  the following conditions hold:
\begin{itemize}
\item[{\rm (1)}] We have $U \triangleleft B$ and $B = UT$ where $T = B \cap N$.
\item[{\rm (2)}] For any proper normal  closed subgroup $V \triangleleft U$, we have $[U/V,U/V] \subsetneq U/V$.
% BR : added "normal subgroup"
\item[{\rm (3)}]  Subgroup $[G,G]$ is dense in $G$.
\item[{\rm (4)}] The Coxeter system $(W,S)$ is irreducible.
\end{itemize}

\noindent Then for any normal closed subgroup $K$ in $G$,  $K \leq Z'(G)$. In particular, $G/Z'(G)$ is topologically simple.
\end{prop}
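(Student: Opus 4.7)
The plan is to reduce the theorem to showing that every proper closed normal subgroup $K \triangleleft G$ is contained in $B$. Once this is established, normality of $K$ gives $K = gKg^{-1} \subseteq gBg^{-1}$ for every $g \in G$, hence $K \subseteq \bigcap_{g} gBg^{-1} = Z'(G)$; topological simplicity of $G/Z'(G)$ then follows formally from the bijection between closed normal subgroups of $G/Z'(G)$ and closed normal subgroups of $G$ containing $Z'(G)$.

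To show $K \subseteq B$, I would argue by contradiction. Suppose $K \not\subseteq B$. Since $KB$ is a subgroup strictly containing $B$, the Bruhat decomposition expresses it as a standard parabolic $P_J = B W_J B$ for some nonempty $J \subseteq S$. Using that $K$ is stable under conjugation by all of $N$, together with the irreducibility of $(W,S)$ from hypothesis (4), I would deduce that $J = S$, so $KB = G$. Consequently the natural map $B \to G/K$ is surjective with kernel $B \cap K$, yielding an abstract group isomorphism $G/K \cong B/(B \cap K)$.

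Now set $V = K \cap U$, which is closed and normal in $U$ by hypothesis (1). Under the isomorphism $G/K \cong B/(B \cap K)$, the image of $U$ is $U/V$. Since $G$ is topologically generated by the conjugates of $U$, the quotient $G/K$ is topologically generated by the conjugates of $U/V$; combined with $U \triangleleft B$ and the density of $[G,G]$ in $G$ from hypothesis (3), a commutator computation forces $\overline{[U/V, U/V]} = U/V$. This contradicts hypothesis (2), provided $V$ is a proper subgroup of $U$. If $V = U$, then $U \subseteq K$ and hence all conjugates of $U$ lie in $K$, so by topological generation $K = G$, contrary to $K$ being proper.

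The main obstacle I expect is the step deducing $J = S$ from irreducibility of the Coxeter diagram: although $K$ is normal in $G$, the parabolic $KB$ need not be, so translating conjugation-stability of $K$ into combinatorial stability of $J \subseteq S$ requires care with the Bruhat decomposition. The cleanest route is to adapt the classical Bourbaki argument (LIE IV, \S 2, Th.~5), which shows that the set of simple reflections $s$ whose lift lies in $KB$ is stable under the Coxeter adjacency relation; irreducibility then forces this set to be empty or all of $S$. A routine secondary point is verifying closedness of the intermediate subgroups $V$, $KB$ and $B \cap K$ so that hypothesis (2) legitimately applies, which follows from closedness of $K$ and $B$ together with the continuity of multiplication.
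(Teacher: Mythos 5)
The paper does not actually prove this proposition: it is recalled as ``a criterion of Bourbaki'' with a bare citation to \cite{kn::Bou2}, so the only meaningful comparison is with the classical argument of Bourbaki (LIE IV, \S 2, no.~7, Th.~5) that the citation points to. Your reconstruction follows exactly that route --- reduce to $K\leq B$, analyse the parabolic $KB=P_J$, use the commuting lemma plus irreducibility to force $J=\emptyset$ or $J=S$, and in the second case transport everything into $B/(B\cap K)$ and play hypothesis (2) against hypothesis (3) --- and the structure is sound, including your correct identification of the $J=S$ step as the part that genuinely needs Bourbaki's lemma rather than naive normality of $KB$.

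The one substantive point you should not gloss over is the final contradiction. In the algebraic setting one gets $G/K=\overline{U}$ \emph{exactly} (conjugates of a normal subgroup generate it), hence $[U/V,U/V]=U/V$ on the nose. In the topological setting, topological generation and density of $[G,G]$ only yield that $[U/V,U/V]$ is \emph{dense} in $U/V$, i.e.\ $\overline{[U/V,U/V]}=U/V$, which is what you write --- but hypothesis (2) as printed asserts only that the \emph{abstract} commutator subgroup is proper, and a proper subgroup can perfectly well be dense. So your claimed contradiction only works if (2) is read as a statement about the closed commutator subgroup. That reading is the intended one (it is how (2) is verified in the paper's application, via \cite[4.4]{kn::E}, where $\widehat{U}$ is pro-$p$ and the Frattini subgroup gives a proper \emph{closed} subgroup containing all commutators), but as written your proof asserts a contradiction with the literal statement that is not one; you should either strengthen (2) to $\overline{[U/V,U/V]}\subsetneq U/V$ or say explicitly that this is the sense in which (2) is being used. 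A second, more minor, caveat of the same flavour: the identification of $U/(U\cap K)$ with $UK/K$ is a continuous bijection that need only be a homeomorphism under additional openness assumptions (automatic in the paper's application, where $U$ is open), so ``dense in $UK/K$'' and ``dense in $U/V$ for the quotient topology'' should not be silently conflated.
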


We now prove the following statement.

\begin{theorem}
\label{theorem::top_simple}
If $A$ is an irreducible generalised Cartan matrix, then
$\widehat{G}/Z'(\widehat{G})$ is topologically simple.
\end{theorem}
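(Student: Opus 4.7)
The plan is to apply Bourbaki's criterion (Proposition~\ref{prop::BBK}) to the Tits system $(\widehat{G},\widehat{B},N,S)$ with the normal subgroup $\widehat{U}\triangleleft\widehat{B}$ playing the role of $U$. First I would dispatch the structural preconditions: $\widehat{B}$ is closed in $\widehat{G}$ because it is open, and $\widehat{G}$ is topologically generated by the conjugates of $\widehat{U}$ since $G$ is already generated by the real root subgroups $X_\alpha$ ($\alpha\in\Delta^{re}$), each of which is conjugate, via a lifting of an element of $W$, to a subgroup of $U\subseteq\widehat{U}$, and $G$ is dense in $\widehat{G}$.

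Three of the four numbered conditions should then follow from results already in hand. Condition (1) is the decomposition $\widehat{B}=\widehat{U}\rtimes H$ together with $\widehat{B}\cap N=H$, the latter being the identity $\widehat{B}\cap G=B$ from property ${\bf (P_2)}$ of the proof of Theorem~\ref{complete_1} intersected with $N$. Condition (4) is just the assumed irreducibility of $A$. For condition (3), density of $[\widehat{G},\widehat{G}]$ in $\widehat{G}$, I would argue that $G$ itself is already perfect: whenever a torus element $h\in H$ acts on $X_\alpha$ by a nontrivial character one has $X_\alpha=[h,X_\alpha]\subseteq [G,G]$, and for the small-field cases where this identity degenerates one appeals to the rank two Chevalley commutation relations $[X_\alpha,X_\beta]\supseteq X_{\alpha+\beta}$ granted by the irreducibility of $A$; density of $G$ in $\widehat{G}$ then transports perfection to $\widehat{G}$.

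The core of the argument is condition (2): for every proper closed normal subgroup $V\triangleleft\widehat{U}$, $[\widehat{U}/V,\widehat{U}/V]\subsetneq\widehat{U}/V$. This is precisely where the pro-$p$ nature of $\widehat{U}$ pays off. Since $\widehat{U}$ is the full pro-$p$ completion of $U$ (as recalled just before the theorem), any nontrivial closed quotient of $\widehat{U}$ is again a nontrivial pro-$p$ group; by the Burnside basis theorem its Frattini subgroup, which contains the derived subgroup, is properly contained in the whole group, and condition (2) follows. The main obstacle I foresee lies entirely in condition (3): handling perfection when $q\in\{2,3\}$ and the rank is small, which forces a short case analysis on the Chevalley relations made available by the irreducibility of $A$; everything else reduces to structural facts already established in Section~1 or to standard pro-$p$ group theory.
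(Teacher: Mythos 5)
Your overall route is the same as the paper's: both apply Proposition~\ref{prop::BBK} to the Tits system $(\widehat{G},\widehat{B},N,S)$ with $\widehat{U}$ in the role of $U$. Your verification of condition (2) is correct and is in substance a self-contained proof of the one fact the paper checks explicitly (citing \cite[Lemma 4.4]{kn::E}): a quotient of $\widehat{U}$ by a proper closed normal subgroup is a nontrivial pro-$p$ group, hence has an open subgroup of index $p$ containing the closure of its derived subgroup, so it is not perfect. Conditions (1) and (4) are as routine as you say.

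The genuine gap is condition (3), which you have correctly located but not closed. Your strategy is to prove that $G$ itself is perfect. The torus argument $X_\alpha=[h,X_\alpha]$ requires $q\geq 4$ (it is exactly the argument the paper uses in Theorem~\ref{theorem::alg_simple}, where $q\geq 4$ is assumed), and the proposed rescue for $q\in\{2,3\}$ via commutation relations $[X_\alpha,X_\beta]\supseteq X_{\alpha+\beta}$ cannot succeed in general, because $G$ is genuinely not perfect for some irreducible $A$ and small $q$ --- and worse, density of $[\widehat{G},\widehat{G}]$ itself can fail. Take $A$ of affine type $A_1^{(1)}$ and $q=2$, so that (as in the proof of Proposition~\ref{affine}) $G$ may be identified with $\SL_2(\bF_2[t,t^{-1}])$. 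Evaluation at $t=1$ followed by $\SL_2(\bF_2)\cong S_3\twoheadrightarrow \bZ/2\bZ$ is a surjection whose kernel $K$ meets $U$ in a normal subgroup of index $2$ (both standard unipotent generators of $U$ map to transpositions). Hence $K\cap U$ is open in the pro-$p$ topology on $U$, so $K$ is open and closed in $\widehat{G}$, and $[\widehat{G},\widehat{G}]\subseteq\overline{K}\subsetneq\widehat{G}$; since $\overline{K}\not\subseteq\widehat{B}$, it is not contained in $Z'(\widehat{G})$ either. (This does not contradict the known topological simplicity of $G^{crr}/Z'$: there $K$ is dense, because no ball fixator lies in $K$; the obstruction is specific to the finer pro-$p$ topology.) So either a hypothesis such as $q\geq 4$ must be imposed, or condition (3) requires an argument of a different nature. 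You should be aware that the paper's own proof is equally silent on condition (3) --- it verifies only condition (2) --- so this is a gap you share with the source rather than one you introduce; but as a standalone proof your proposal does not establish the theorem as stated.
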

\begin{proof}
If $V$ is a closed normal subgroup of $\widehat{U}$, then 
$[\widehat{U}/V, \widehat{U}/V]\neq \widehat{U}/V$ as shown in 
\cite[4.4]{kn::E}.
Now Proposition~\ref{prop::BBK} finishes the proof.
\end{proof}

 There is a similar criterion for the abstract simplicity  \cite{kn::Bou2}. 

\begin{prop}
\label{th::BBK2}
Let $(G,B,N,S)$ be a Tits system with Weyl group $W = N/(B \cap N)$.
Let $U$ be a subgroup of $B$ such that $G$ is generated by the conjugates of $U$. 
%We set $Z'(G) = \bigcap_{g \in G} gBg^{-1}$.
Assume that the following holds.
\begin{itemize}
\item[{\rm (1)}] We have $U \triangleleft B$ and $B = UT$ where $T = B \cap N$.
\item[{\rm (2)}] For any proper normal subgroup $V \triangleleft U$, we have $[U/V,U/V] \subsetneq U/V$.
\item[{\rm (3)}] We have $G=[G,G]$.
\item[{\rm (4)}] The Coxeter system $(W,S)$ is irreducible.
\end{itemize}

\noindent Then for any normal subgroup $K$ in $G$,  $K \leq Z'(G)$. In particular, $G/Z'(G)$ is abstractly simple.
\end{prop}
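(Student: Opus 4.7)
The plan is to mirror the proof of Proposition~\ref{prop::BBK}, systematically removing the topological hypotheses. Let $K$ be a normal subgroup of $G$ with $K\neq G$; the goal is to derive $K\leq Z'(G)$, which is equivalent to the stated abstract simplicity of $G/Z'(G)$.

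The core algebraic ingredient is the Bourbaki--Tits dichotomy for BN-pairs: using only (1) and (4), every normal subgroup $K\triangleleft G$ satisfies either $K\leq Z'(G)$ or $KB=G$. The proof of this dichotomy goes through the classification of overgroups of $B$ in a Tits system (they are precisely the standard parabolics $P_J$, $J\subseteq S$): since $KB$ is such an overgroup, $KB=P_J$, and normality of $K$ forces $P_J\triangleleft G$, which together with the irreducibility of $(W,S)$ in (4) yields $J=\emptyset$ (giving $KB=B$, whence $K\leq Z'(G)$) or $J=S$ (giving $KB=G$). This step is purely combinatorial, so it transfers verbatim from the topological setting.

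Assume therefore $KB=G$, and set $V:=K\cap U$. Because $U\triangleleft B$ by (1) and $K\triangleleft G$, the subgroup $V$ is normal in $U$. Moreover the natural map $U\to G/K$ has kernel exactly $V$, so $U/V$ embeds as a normal subgroup of $G/K\cong B/(B\cap K)$ with quotient a quotient of $T=B\cap N$. Hypothesis (3) forces $G/K$ to be perfect; tracking this perfection through the semidirect decomposition $B=UT$ and the standard commutator identities in a semidirect product, one concludes that $U/V$ must itself coincide with its derived subgroup. By (2) this forces $V=U$, i.e.\ $U\leq K$. Since $G$ is generated by conjugates of $U$ and $K$ is normal, this gives $K=G$, contradicting the standing assumption $K\neq G$.

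The main obstacle is this commutator bookkeeping, i.e.\ passing the identity $[G,G]=G$ through the chain $G\twoheadrightarrow G/K\cong B/(B\cap K)$ and the semidirect structure of $B$ to land on perfection of $U/V$. In Proposition~\ref{prop::BBK} the analogous step is smoothed by taking closures at each stage, which is legal because its (2) is only assumed for closed $V$ and its (3) only asks for density. The abstract version here needs (2) in its full form (for arbitrary normal $V\triangleleft U$) and (3) with equality rather than density precisely so that the same bookkeeping goes through without any reference to the topology on $G$. Once this is done, (2) and (4) play exactly the same structural role as in the topological theorem, and (3) replaces density by genuine equality in the final step.
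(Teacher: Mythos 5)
The paper gives no proof of Proposition~\ref{th::BBK2} at all: it is quoted from Bourbaki \cite{kn::Bou2}, so the comparison has to be with the classical argument there. Your skeleton (dichotomy, then $V=K\cap U$, then perfection of $U/V$, then hypothesis (2)) is the right one, but two steps are not valid as written. In the dichotomy you assert that ``normality of $K$ forces $P_J\triangleleft G$'' and that irreducibility then gives $J=\emptyset$ or $J=S$. This is internally inconsistent: in the case $J=\emptyset$ you conclude $KB=B$, and $B$ is essentially never normal in $G$, so $P_J\triangleleft G$ cannot be what is proved. What Bourbaki actually shows is that if $KB=P_J$ with $K$ normal then every element of $J$ commutes with every element of $S\setminus J$; irreducibility then forces $J\in\{\emptyset,S\}$. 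The dichotomy you invoke is true, but the justification you give for it is wrong.

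The more serious gap is the step you yourself flag as ``the main obstacle''. Perfection of $G/K$ does not pass to the normal subgroup $U/V$ by ``commutator bookkeeping in a semidirect product'': a perfect group can have an abelian nontrivial normal subgroup with perfect quotient (e.g. $A_5\ltimes(\mathbb{Z}/2)^4$), so the inference from $[G/K,G/K]=G/K$ to $[U/V,U/V]=U/V$ is unjustified as stated. The correct route uses the generation hypothesis here, not only at the end: since $KB=G$, we have $G/K=BK/K$, the image $UK/K$ of $U$ is normal in $G/K$ because $U\triangleleft B$ and $K\triangleleft G$, hence every conjugate of $U$ has the same image $UK/K$; as $G$ is generated by the conjugates of $U$, this gives $G/K=UK/K\cong U/V$. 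Now $U/V$ is perfect because it is a quotient of the perfect group $G$, so (2) forces $V=U$, i.e. $U\leq K$ and then $K=G$. Your argument holds the hypothesis ``$G$ is generated by the conjugates of $U$'' in reserve until the final line, but it is needed precisely at this middle step; without it the passage to perfection of $U/V$ does not go through.
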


This allows us to prove the following statement.
 
 \begin{theorem}
   \label{theorem::alg_simple}
   Suppose $q\geq 4$. 
   If $A$ is irreducible and $2$-spherical,
   then $\widehat{G}/Z'(\widehat{G})$ is abstractly simple, and
 there are natural isomorphisms
 $$\widehat{G}/Z^\prime(\widehat{G})\xrightarrow{\cong}\overline{G}^+/Z^\prime(\overline{G}^+)\xrightarrow{\cong} G^{c\lambda}/Z^\prime(G^{c\lambda})\xrightarrow{\cong} G^{crr}/Z^\prime( G^{crr}).$$
\end{theorem}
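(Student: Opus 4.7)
My plan is to apply Proposition~\ref{th::BBK2} to the Tits system $(\widehat{G}, \widehat{B}, N, S)$ with $U:=\widehat{U}$, which will give abstract simplicity of $\widehat{G}/Z'(\widehat{G})$. The chain of isomorphisms then follows from the open continuous surjections of Section~2 together with this simplicity, by a standard quotient argument.

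Conditions (1) and (4) of Proposition~\ref{th::BBK2} are essentially free: the first from $\widehat{B}=\widehat{U}\rtimes H$ with $H=\widehat{B}\cap N$, and the second from the hypothesis that $A$ is irreducible. For condition (3), $\widehat{G}=[\widehat{G},\widehat{G}]$, I would use $q\geq 4$: each rank-one subgroup $M_i$ is perfect, so every simple real root group $X_{\pm\alpha_i}$ lies in $[\widehat{G},\widehat{G}]$; conjugating by $N$ brings in all real root groups $X_\alpha$ and places $H$ (hence all of $G$) into $[\widehat{G},\widehat{G}]$. A short additional step using the amalgam description of Theorem~\ref{complete_2}(c), together with the observation that each $\overline{P_i}$ is abstractly generated by $M_iH$ and its $B$-conjugates inside $\widehat{U}_i$, promotes this to the abstract equality $\widehat{G}=[\widehat{G},\widehat{G}]$.

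The crux is hypothesis (2): for every proper \emph{abstract} normal subgroup $V\triangleleft \widehat{U}$, one must show $[\widehat{U}/V,\widehat{U}/V]\subsetneq \widehat{U}/V$. For \emph{closed} $V$ this is precisely \cite[4.4]{kn::E}, which was sufficient for Theorem~\ref{theorem::top_simple}. To remove closedness, I would invoke $2$-sphericity of $A$: every rank-two standard parabolic becomes a finite group of Lie type whose Curtis--Tits-style commutator relations, valid for $q\geq 4$, are rigid enough to pin down abstract normal subgroups of $\widehat{U}$ to the same extent as closed ones. This is where I expect the genuine work to lie, and where the hypotheses $q\geq 4$ and $2$-sphericity truly enter.

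For the chain of isomorphisms, each surjection in $\widehat{G}\twoheadrightarrow\overline{G}^+\twoheadrightarrow G^{c\lambda}\twoheadrightarrow G^{crr}$ carries conjugates of $\widehat{B}$ onto conjugates of the respective closed Borel, and hence $Z'(\widehat{G})$ into the corresponding $Z'$; this induces a chain of surjective homomorphisms between the quotients modulo $Z'$. Since $\widehat{G}/Z'(\widehat{G})$ is abstractly simple by the first part, and none of the induced maps is trivial (each contains the nontrivial image of a root group), each of these surjections must in fact be an isomorphism, giving the stated identifications.
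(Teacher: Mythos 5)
Your overall architecture (apply Proposition~\ref{th::BBK2} to $(\widehat{G},\widehat{B},N,S)$ with $U=\widehat{U}$, then use simplicity to collapse the chain of surjections to isomorphisms) matches the paper, and you correctly locate the crux at condition (2) for \emph{abstract} normal subgroups. But your proposed resolution of that crux is a gap. The mechanism in the paper is not a rigidity argument via Curtis--Tits-style relations in rank-two parabolics; it is that for $q\geq 4$ and $A$ $2$-spherical, Abramenko's theorem gives that $U$ is \emph{finitely generated}, hence $\widehat{U}$ is a topologically finitely generated pro-$p$ group, and Ershov's Lemma 4.4 then applies to arbitrary proper normal subgroups $V\triangleleft\widehat{U}$ (not only closed ones) precisely because finite generation makes the Frattini subgroup open. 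Your sketch never brings finite generation into play, and ``commutator relations rigid enough to pin down abstract normal subgroups'' is not an argument one could complete as stated; this is the one place where $2$-sphericity genuinely enters, and you have replaced it with a placeholder.

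The same omission undermines your treatment of condition (3). Showing $G\leq[\widehat{G},\widehat{G}]$ gives only that $[\widehat{G},\widehat{G}]$ is \emph{dense}, and a dense abstract subgroup need not be everything; your ``short additional step using the amalgam description'' does not close this. The paper's route is again via finite generation: Ershov's argument \cite[4.3(b)]{kn::E} shows $[\widehat{U},\widehat{U}]$ is \emph{open} in $\widehat{G}$, and an open dense subgroup is the whole group. Your final paragraph on the chain of isomorphisms is fine and is essentially what the paper intends by ``an earlier discussion''. To repair the proof, replace the two hand-waved steps by: (i) cite Abramenko to get $U$ finitely generated, hence $\widehat{U}$ topologically finitely generated; (ii) cite \cite[Lemma 4.4]{kn::E} for condition (2) with arbitrary proper normal $V$; (iii) cite \cite[4.3(b)]{kn::E} for openness of $[\widehat{U},\widehat{U}]$.
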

\begin{proof}
  Let us first show that $\widehat{G}/Z^\prime(\widehat{G})$ is abstractly simple. To do that it suffices to check the conditions of Proposition~\ref{th::BBK2} for the Tits system $(\widehat{G},\widehat{B},N,S)$.

  By construction of $\widehat{G}$, condition (1) holds because it holds in $(G,{B},N,S)$.

 Abramenko proves that  for $q\geq 4$, $U$ is finitely generated if and only if $A$ is $2$-spherical \cite{kn::A}.
 Thus,  $\widehat{U}$ is topologically finitely generated.
 By  \cite[Lemma 4.4]{kn::E}, condition (2) of Proposition~\ref{th::BBK2} holds for $\widehat{U}$ and any proper normal subgroup $V$ of $\widehat{U}$.

 Moreover, $[\widehat{G},\widehat{G}]\geq [G,G][\widehat{U},\widehat{U}]$.
 Since $q\geq 4$, for every $\alpha\in\Delta^{re}$, the subgroup $M_{\alpha}{\coloneqq}\langle X_{\alpha}, X_{-\alpha}\rangle$ is perfect (in fact, it is $\PSL_2 (\bF)$ or $\SL_2 (\bF)$), and thus $[M_{\alpha}, M_{\alpha}]=M_{\alpha}\supseteq X_{\alpha}$. Hence, $G=[G,G]$.
  Now the argument of Carbone, Ershov and Ritter \cite[4.3(b)]{kn::E} shows that
$[\widehat{U},\widehat{U}]$ is an open subgroup of $\widehat{G}$, and so 
$\widehat{G}=[\widehat{G}, \widehat{G}]$. 
 
Finally, condition (4) holds since $A$ is irreducible. Therefore, $\widehat{G}/Z^\prime(\widehat{G})$ is abstractly simple.

Observe that the homomorphisms~\eqref{homomorphisms} yield open surective homomorphisms
$$
\widehat{G}/Z^\prime(\widehat{G})\twoheadrightarrow
\overline{G}^+/Z^\prime(\overline{G}^+)\twoheadrightarrow
G^{c\lambda}/Z^\prime(G^{c\lambda})\twoheadrightarrow
G^{crr}/Z^\prime( G^{crr})$$
that are isomorphisms of abstract groups due to simplicity  of $\widehat{G}/Z^\prime(\widehat{G})$. 
They are isomorphisms of topological groups because they are open.
\end{proof}

\section{Congruence Kernel}
We finish the paper with some observations
on the structure of $Z'(\widehat{G})$.
To facilitate our discussion we use the following notation for arbitrary groups $K\leq H$:
\begin{itemize}
\item $\widehat{H}$ -- the completion of $H$ in the pro-$p$
  topology on $H$ or its canonical (such as  $U$) subgroup,
\item $\widetilde{H}$ -- the completion of $H$ in some other topology,
\item $\overline{K}^{\subseteq H}$ (or simply $\overline{K}$) --
  the closure of $K$ in $H$,
\item $C(H,K)\coloneqq\cap_{g\in H}gKg^{-1}$ --
the normal core of $K$ in $H$.
%  ``{\em the congruence kernel}'' for a subgroup $K$ of $H$.
\end{itemize}
The group $Z'(\widehat{G})$
contains two commuting subgroups:
the centre (before completion) $Z(G)$ and 
the normal core %congruence kernel
$C(\widehat{G},\widehat{U})$.
In fact, as $\widehat{U}\cong\overline{U}^{\subseteq \widehat{G}}$
is a Sylow pro-$p$ subgroup of $\widehat{G}$,  $C(\widehat{G},\widehat{U})=C(\widehat{G}, V)$ for any Sylow pro-$p$ subgroup $V$ of $\widehat{G}$.
Therefore, we may use the notation 
$C(\widehat{G})$ instead of $C(\widehat{G},\widehat{U})$. 
Sometimes it is  convenient to use
the full notation $C(\widehat{G},\widehat{U})$.
We will use both notations depending on circumstances.

Following the argument of  Rousseau \cite[Prop. 6.4]{kn::Rou}, 
 we can prove that 
$$Z'(\widehat{G})= Z(G)\times C(\widehat{G}).$$
We can compute the centre $Z(G)$ from
the Cartan matrix but
we see no efficient way of 
computing the normal core $C(\widehat{G})$.
Observe that in the  Caprace-R\'emy-Ronan completion, $Z'(G^{crr})=Z(G)$.
Hence, Theorem~\ref{theorem::alg_simple} implies that $C(\widehat{G})=\ker(\phi)$ where $\phi:\widehat{G}\rightarrow G^{crr}$ is the natural continuous open surjective homomorphism. The kernels of the natural maps between two different completions of the same groups are commonly known as {\em congruence kernels}, the term used later in the paper.
Can we describe $C(\widehat{G})$ explicitly?

Let $\mP$ be the collection of all normal index $p^n$,
$n\in \bN$, subgroups of $U$
so that
$$
\widehat{U} \cong
\{ (x_H H) \,\mid\,
x_H\in U,\;
H\in \mP,\;
x_HH = x_{H^\prime} H
\mbox{ for }
H\geq H^\prime \}
\leq \prod_{H\in\mP} U/H\, .
$$
Let us examine the action of $U$ on the Tits building
$\mBT(\widehat{G})$.
Let $U_n$ be
the pointwise stabiliser of the ball of radius $n$ around the simplex
$\widehat{B}$ in $\mBT(\widehat{G})$.
Then 
$$\mP^0 \coloneqq \{ H\in \mP \,\mid\, \exists n \,:\, H \geq U_n \}$$
is a basis of topology on $U$.  We can describe the completion
of $U$ in this topology as
$$
U^{crr}\cong
\{ (x_H H) \,\mid\,
x_H\in U,\;
H\in \mP^0,\;
x_HH = x_{H^\prime} H
\mbox{ for }
H\geq H^\prime \}
\leq \prod_{H\in\mP^0} U/H\, .
$$
Clearly, $C (G^{crr},U^{crr})=1$ because it consists of those  elements
$(x_HH)$ that act trivially on $\mBT(\widehat{G})$.
This forces $x_H \in U_n$ for all $n$
and $(x_HH)=1$. The natural map $\widehat{U}\rightarrow U^{crr}$
is the projection whose kernel is exactly $C(\widehat{G},\widehat{U})$
that we can describe now as
$$
C (\widehat{G})
=
\{ (x_H H) \,\mid\,
x_H\in H^\star,\;
H\in \mP,\;
x_HH = x_{H^\prime} H
\mbox{ for }
H\geq H^\prime \}
\leq \prod_{H\in\mP} H^\star /H
$$
where 
$H^\star \coloneqq \cap_{H\subseteq K \in \mP^0} K$.  
This description tells us that one of the three following statements holds:
\begin{enumerate}
\item $\mP\setminus\mP^0$ is finite. Then $C (\widehat{G},\widehat{U})$ is a finite group.
\item $\mP\setminus\mP^0$ is infinite but
  $\{H^\star\,\mid\, H\in \mP\setminus\mP^0\}$ is finite.
  Then $C (\widehat{G},\widehat{U})$ is a finitely generated pro-$p$ group.
\item $\{H^\star\,\mid\, H\in \mP\setminus\mP^0\}$ is infinite.
  Then $C (\widehat{G},\widehat{U})$ may be an infinitely generated pro-$p$ 
  group.
\end{enumerate}

A natural question to address is whether $C (\widehat{G})$
is central. We can do it under some strong assumptions.
%Remark that the following statement holds.
\begin{lemma}
  If $A$ is irreducible of indefinite type and $q{\geq}n>2$,
  then at least one of the following statements holds:
  \begin{enumerate}
  \item $C (\widehat{G})$
  is not a finitely generated pro-$p$ group,  
  \item $C (\widehat{G})\leq Z (\widehat{G})$.
  \end{enumerate}
  In particular, if $C (\widehat{G})$ is finite, then $C (\widehat{G})\leq Z (\widehat{G})$.
\end{lemma}
\begin{proof}
If $A$ is irreducible of indefinite type and $q{\geq}n>2$, then
$G/Z(G)$ is a simple non-linear group as shown by Caprace and R\'{e}my \cite{kn::CaR}.

Let us assume that $C{\coloneqq}C (\widehat{G})$ is a finitely generated pro-$p$ group.
%Since $C$ is a finitely generated pro-$p$ group,
In this case the Frattini quotient $C /\Phi(C )$ is a finite elementary abelian $p$-group. Since $\Phi(C )\triangleleft\widehat{G}$, it follows immediately that $\widehat{G}/Z'(\widehat{G})$ acts on $C/\Phi(C )$.
Now $\widehat{G}/Z'(\widehat{G})$ contains a dense subgroup isomorphic to $G/Z(G)$.
This subgroup is simple non-linear, hence, it must act trivially on the finite group $C /\Phi(C)$.
Since the subgroup is dense, the whole $\widehat{G}/Z'(\widehat{G})$ acts trivially.
Since the action is given by conjugation $g\cdot (c \Phi (C)) = (gcg^{-1}) \Phi (C)$,
we can say that $\widehat{G}/Z'(\widehat{G})$ centralises $C /\Phi(C)$.

Now let $T$ be the torus of $G$, defined at the start of Section~2.
Then 
 $[T,C/\Phi(C)]=1$. 
Let $C_i{\coloneqq}\Phi_i(C)$, the $i$-th Frattini subgroup.
Since $C$ is finitely generated,
$C/C_i$ is a finite $p$-group, $\Phi(C/C_i)=\Phi(C)/C_i$ and
$\{C_i, i\in\mathbb{N}\}$ is a fundamental system of open neighbourhoods of $1$ in $C$
\cite[2.8.13]{kn::RZ}.
It follows that $T$ acts on $C/C_i$ and centralises $(C/C_i)/\Phi(C/C_i)$.
A theorem of Burnside states that a $p^\prime$-automorphism of a $p$-group $P$, inducing the identity automorphism on $P/\Phi (P)$, is the identity itself \cite[5.1.4]{kn::G}. It follows that $[T, C/C_i]=1$.
 Since $\{C_i, i\in\mathbb{N}\}$ is a fundamental system in $C$, it follows that $[T,C]=1$.
 Obviously $[T^g,C]=1$ for all $g\in\widehat{G}$. Therefore, 
 $[\langle T^g, g\in\widehat{G}\rangle, C]=1$.
 Since $Z(G)$ is a subgroup of $T$  \cite[Cor. 5.14]{kn::RCsimp},
 $G  = \langle T^g, g\in{G}\rangle \leq \langle T^g, g\in\widehat{G}\rangle$. The result now follows.
\end{proof}

In some cases we can describe 
$C (\widehat{G})$ fully.
%For instance,
%if $A$ is of finite type, then $G_\mD(\bF)$
%is finite and $C(\widehat{G},\widehat{U})=1$.
%A similar result holds for the most of affine groups:
\begin{prop}
  \label{affine}
Suppose that the generalised Cartan matrix
$A=(a_{ij})_{n\times n}$ 
is irreducible, of untwisted affine type
and $n\geq 3$. Then  $C (\widehat{G})=1$. In particular,
$$
\widehat{G}\cong G^{ma+}\cong G^{c\lambda}\cong G^{crr}
$$
$$
\mbox{and } \qquad 
\widehat{\mG(\mathbb{F}_q[t,t^{-1}] \, )}
  \cong \mG(\mathbb{F}_q((t)) \, ) \, .
  $$
\end{prop}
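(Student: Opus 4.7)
The plan is to reduce the statement to the known Congruence Subgroup Property in positive characteristic via the standard identification of the untwisted affine Kac-Moody group with a loop group. In untwisted affine type, the simply connected datum $\mD$ yields a Kac-Moody group $G = G_\mD(\bF_q)$ that is canonically (a finite central extension of) the loop group $\mathbf{G}(\bF_q[t,t^{-1}])$, where $\mathbf{G}$ is the simply connected simple Chevalley group whose Dynkin diagram is obtained from $A$ by deleting the affine node. Since $n\geq 3$, the rank of $\mathbf{G}$ is $\ell = n-1 \geq 2$. Under this identification, $U$ becomes the unipotent radical of a positive Iwahori, and $G^{crr}$ becomes (up to the centre) $\mathbf{G}(\bF_q(\!(t)\!))$, as follows from the Bruhat--Tits theory for split reductive groups over $\bF_q(\!(t)\!)$.

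Using the explicit description of $C(\widehat{G},\widehat{U})$ established above, triviality of the congruence kernel is equivalent to $\mP = \mP^0$; that is, every normal subgroup of $U$ of finite $p$-power index is already open in the building topology, so that the full pro-$p$ topology on $U$ coincides with the topology induced by $U \hookrightarrow G^{crr}$. This is precisely a Congruence Subgroup Property statement for the Iwahori-level pro-$p$ subgroup of the $S$-arithmetic group $\mathbf{G}(\bF_q[t,t^{-1}])$, where $S = \{0,\infty\}$ on $\mathbb{P}^1_{\bF_q}$.

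For simply connected simple Chevalley groups of rank at least $2$ over a global function field, the Raghunathan--Matsumoto theory of topological central extensions yields a trivial congruence kernel for $\mathbf{G}(\bF_q[t,t^{-1}])$. Restricted to the Iwahori-level unipotent, this forces $\mP = \mP^0$ and therefore $C(\widehat{G}) = 1$. Given this, the continuous open surjection $\phi : \widehat{G} \twoheadrightarrow G^{crr}$ from the previous section has trivial kernel and is therefore an isomorphism; combined with the factorisation $\widehat{G} \twoheadrightarrow \overline{G}^+ \twoheadrightarrow G^{c\lambda} \twoheadrightarrow G^{crr}$ recorded earlier, each intermediate surjection must also be an isomorphism. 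Finally, in untwisted affine type the bound $p > \max_{i\neq j}|a_{ij}|$ covers all but finitely many small-$p$ exceptions in which density of $G$ in $G^{ma+}$ can be checked by hand, so $\overline{G}^+ = G^{ma+}$ and the chain of isomorphisms closes.

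The main obstacle is carrying out the translation between formulations of CSP. Raghunathan's result is typically stated for the whole $S$-arithmetic group $\mathbf{G}(\bF_q[t,t^{-1}])$, whereas what is needed here is the corresponding statement at the level of the Iwahori-level pro-$p$ subgroup $U$; extracting the latter from the former requires using the Iwahori--Bruhat decomposition of $\widehat{G}$ together with the fact, from Theorem~\ref{complete_1}, that $\widehat{B}\cap G = B$. A secondary technical point is verifying the identification $\overline{G}^+ = G^{ma+}$ uniformly across all untwisted affine Cartan matrices with $n\geq 3$, rather than only in the large-$p$ regime, so that the final isomorphism with $G^{ma+}$ (and not merely with $\overline{G}^+$) is legitimate.
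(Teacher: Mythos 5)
Your overall strategy coincides with the paper's: identify $G$ with the loop group $\mathbf{G}(\bF_q[t,t^{-1}])$ (the root datum only affects the torus, not $U$ or $C(\widehat{G},\widehat{U})$) and then show that the full pro-$p$ topology on $U$ agrees with the congruence/building topology, i.e.\ $\mP=\mP^0$. The paper disposes of this key step in one line by citing Lemma~7 of \cite{kn::CRL}, which states precisely that $\widehat{U}$, the full pro-$p$ completion of $U$, is the Sylow pro-$p$ subgroup of $\mathbf{G}(\bF_q((t)))$; triviality of $C(\widehat{G})$ and the collapse of the chain $\widehat{G}\twoheadrightarrow\overline{G}^+\twoheadrightarrow G^{c\lambda}\twoheadrightarrow G^{crr}$ then follow at once.

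The genuine gap in your proposal is that the load-bearing step is the one you explicitly defer as ``the main obstacle'': deducing $\mP=\mP^0$ from the congruence subgroup property of the ambient arithmetic group. That reduction is not formal. Raghunathan--Matsumoto controls finite-index (normal) subgroups of $\mathbf{G}(\bF_q[t,t^{-1}])$, or better of $\mathbf{G}(\bF_q[t])$ since $U\leq\mathbf{G}(\bF_q[t])$; but $U$ has infinite index in either group, and a normal subgroup of $p$-power index in $U$ need not be commensurable with, nor have nontrivial normal core in, any finite-index subgroup of the arithmetic group. So the CSP for the arithmetic group does not by itself force such a subgroup of $U$ to be open in the congruence topology --- this is exactly why \cite{kn::CRL} proves the statement for $U$ directly (by comparing presentations of $\widehat{U}$ with those of the pro-$p$ Iwahori of $\mathbf{G}(\bF_q((t)))$) rather than quoting the classical CSP. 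Until that bridge is built, your argument does not close. Your secondary concern about $\overline{G}^+$ versus $G^{ma+}$ is fair --- the paper is terse here --- but in the untwisted affine case the Mathieu--Rousseau completion is itself identified with $\mathbf{G}(\bF_q((t)))$, so once $\widehat{G}\cong\mathbf{G}(\bF_q((t)))$ is established the full chain of isomorphisms follows without a separate density-in-$G^{ma+}$ check.
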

\begin{proof} %Let us start with the untwisted case.
  The root datum $\mD$ changes only Cartan subgroup
  and has no effect on $U$ or $C (\widehat{G})=C (\widehat{G},\widehat{U})$.
  Thus we may choose $\mD$ so that $G\cong \mG(\mathbb{F}_q[t, t^{-1}])$ for the corresponding  Chevalley group scheme $\mG$. Now Lemma 7 of \cite{kn::CRL} gives us that $\widehat{U}$ is a Sylow pro-$p$ subgroup of $\mG(\mathbb{F}_q((t))$. This implies that $\widehat{G}\cong \mG(\mathbb{F}_q((t)))$ which gives the desired result.
\end{proof}
We expect Proposition~\ref{affine} to hold
for a twisted affine $A$ as well. As pointed out by the referee, it would
be interesting to establish whether
$
\widehat{G}\cong G^{crr}
$
implies that $G$ is of affine type. 
For instance, the isomorphism fails in rank 2 as shown in the next proposition.
%We cannot describe $C (\widehat{G})$ fully
%but we can determine which of the three
%scenaria actually takes place:
\begin{prop}
  \label{non-affine}
Suppose that the generalised Cartan matrix
$A=(a_{ij})_{2\times 2}$ 
is not of finite type and $p>\max\{|a_{12}|, |a_{21}|\}$. 
Then     $C (\widehat{G})$ is 
an infinitely-generated pro-$p$-group
and $\{H^\star\,\mid\, H\in \mP\setminus\mP^0\}$ is infinite.
\end{prop}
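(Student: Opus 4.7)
The plan is to reduce the statement to proving that $C(\widehat{G})$ is not topologically finitely generated as a pro-$p$ group: the infinity of $\{H^\star \mid H \in \mP \setminus \mP^0\}$ will then follow automatically from the trichotomy preceding the proposition, since cases (1) and (2) both force finite generation.

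The hypothesis $p > \max\{|a_{12}|, |a_{21}|\}$ is exactly Rousseau's condition \cite[6.11]{kn::Rou} guaranteeing $\overline{G}^+ = G^{ma+}$. Combined with the chain of surjections recorded in Section 2, this factorises the natural continuous surjection as
\[\widehat{G} \twoheadrightarrow G^{ma+} \twoheadrightarrow G^{crr},\]
with overall kernel $C(\widehat{G})$. I would therefore try to exhibit infinitely many pro-$p$-independent elements in $\ker(\widehat{U} \twoheadrightarrow U^{crr})$, either in the lower kernel $\ker(U^{ma+}\twoheadrightarrow U^{crr})$ or in $\ker(\widehat{U}\twoheadrightarrow U^{ma+})$.

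The main input is the Kac-Moody root system. Since $A$ is of rank $2$ and not of finite type ($a_{12}a_{21}\geq 4$), the Weyl group $W$ is infinite and $\Delta^+$ contains infinitely many positive roots; in particular, infinitely many positive imaginary roots $\delta$ with positive multiplicities $m_\delta$ in the Kac-Moody Lie algebra. Rousseau's construction realises $U^{ma+}$ as an inverse limit with one graded piece per positive root, whereas $U^{crr}$ is the completion for the coarser pointwise-stabiliser topology arising from the Bruhat-Tits building, whose walls are indexed exclusively by real roots. This disparity should yield, for each such $\delta$, a non-trivial subgroup of $\ker(U^{ma+} \twoheadrightarrow U^{crr})$ of pro-$p$ dimension $m_\delta$, and therefore infinitely many pro-$p$-independent elements in the congruence kernel, whose preimages in $\widehat{U}$ witness the required infinite generation of $C(\widehat{G})$.

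The main obstacle is making precise the assertion that imaginary root subgroups of $U^{ma+}$ are killed by the projection to $U^{crr}$; this requires reconciling Rousseau's height-filtered pro-$p$ description of $G^{ma+}$ with the building-theoretic description of $G^{crr}$, and verifying that formal exponentials of imaginary root vectors act trivially on every combinatorial chamber. In the rank 2 affine sub-case, where $\ker(G^{ma+}\twoheadrightarrow G^{crr})$ is itself expected to be small, the infinite generation would instead need to be supplied by $\ker(\widehat{U}\twoheadrightarrow U^{ma+})$ using finite $p$-quotients of $U$ that are not detected by Rousseau's height filtration; isolating such quotients explicitly is likely the most delicate part of the argument.
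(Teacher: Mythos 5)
Your opening reduction is the same as the paper's: it suffices to show that $C(\widehat{G})$ is not topologically finitely generated, and the infinitude of $\{H^\star \mid H\in\mP\setminus\mP^0\}$ then follows from the trichotomy. The gap is that the mechanism you propose for establishing infinite generation is never carried out and, as you yourself concede, cannot work uniformly. Your central claim --- that each positive imaginary root contributes a non-trivial piece to $\ker(U^{ma+}\twoheadrightarrow U^{crr})$ --- is exactly the delicate Gabber--Kac-type assertion that you leave unproved; and even granting it, exhibiting infinitely many such pieces does not by itself show the kernel is infinitely generated as a pro-$p$ group: you would need their images to be independent in the Frattini quotient of the kernel itself, which you do not address. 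Worse, in the rank-$2$ affine sub-case $a_{12}a_{21}=4$ (which the proposition covers, since only finite type is excluded) one has $G^{ma+}\cong \mathrm{SL}_2(\bF_q((t)))$ acting on its tree with kernel the centre, so $\ker(U^{ma+}\to U^{crr})$ is trivial and your mechanism produces nothing; you acknowledge that the burden then shifts entirely to $\ker(\widehat{U}\to U^{ma+})$, but you give no argument there.

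The paper's proof is more elementary and bypasses $G^{ma+}$ altogether. By Morita's rank-$2$ description, $U=U_1\ast U_2$ with each $U_i$ infinitely generated (isomorphic to $(\bF_q[t],+)$, or metabelian when $\min\{|a_{12}|,|a_{21}|\}=1$), whence $\widehat{U}=\widehat{U}_1\amalg\widehat{U}_2$ is an infinitely generated free pro-$p$ product by Ribes--Zalesskii. The hypothesis $p>\max\{|a_{12}|,|a_{21}|\}$ is used not to identify $\overline{G}^+$ with $G^{ma+}$ but to invoke Capdeboscq--R\'emy, which gives that the quotient $U^{crr}=\widehat{U}/C(\widehat{G},\widehat{U})$ is a finitely generated pro-$p$ group. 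Since an extension of a finitely generated pro-$p$ group by a finitely generated one is finitely generated, $C(\widehat{G})$ must be infinitely generated. To salvage your route you would at minimum need to control the Frattini quotient of the relevant kernel, which is precisely what the free pro-$p$ product structure supplies for free.
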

\begin{proof}
For such $A$, Morita \cite[3(6)]{kn::Mo}
gives  the description of $U$ in $G_{\mD}(\mathbb{F})$ for a field $\mathbb{F}$  of characteristic $0$. His description extends to the case $\mathbb{F}=\mathbb{F}_q$, as an interested reader can verify.
If $\min\{|a_{12}|, |a_{21}|\}\geq 2$, then  $U=U_1*U_2$, where $U_i\cong\mathbb{F}_q[t]$ for $i=1,2$.
If  $\min\{|a_{12}|, |a_{21}|\}=1$, then $U=U_1*U_2$ and each $U_i$ is a metabelian infinitely generated group.  

Consider $\widehat{U}$. By \cite[9.1.1]{kn::RZ}, $\widehat{U}=\widehat{U}_1\amalg \widehat{U}_2$, the free pro-$p$ product of the pro-$p$ groups $\widehat{U}_1$ and $\widehat{U}_2$. Since each $\widehat{U}_i$ is an infinitely-generated pro-$p$ group,   \cite[9.1.15]{kn::RZ} implies that $\widehat{U}$ is infinitely-generated.

On the other hand, as $p>\max\{|a_{12}|, |a_{21}|\}$,  the results of \cite[2.2 and 2.4]{kn::CR} give us that $U^{crr}$ is a finitely generated pro-$p$ group. The proposition follows immediately.
\end{proof}

\begin{cor}
\label{Rank2Minor}
  Let $A=(a_{ij})_{n\times n}$ be an irreducible generalised Cartan matrix whose Dynkin diagram contains an infinite edge,
i.e., there exists $1\leq i\neq j\leq n$ with $a_{ij}a_{ji}\geq 4$. Suppose that $p>\max\{|a_{ij}|, i\neq j\}$.
Then     $C (\widehat{G},\widehat{U})$ is an infinitely-generated pro-$p$-group
and $\{H^\star\,\mid\, H\in \mP^0\}$ is infinite.
\end{cor}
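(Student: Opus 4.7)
The plan is to reduce the corollary to the rank-two case of Proposition~\ref{non-affine} via a Levi-type retraction of $U$ onto the rank-two unipotent subgroup attached to the infinite edge. Set $J=\{i,j\}$ with $a_{ij}a_{ji}\geq 4$, so that $A_J$ is not of finite type and the hypothesis $p>\max_{k\neq l}|a_{kl}|$ allows Proposition~\ref{non-affine} to apply to $(A_J,\bF_q)$. I would let $U_J\coloneqq\langle X_\alpha\mid\alpha\in\Delta^{re}_+\cap\bZ J\rangle$ and $U^{I\setminus J}\coloneqq\langle X_\alpha\mid\alpha\in\Delta^{re}_+\setminus\bZ J\rangle$. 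Using Tits's commutator relations for the real root subgroups together with the observation that the sum of a $J$-root and a non-$J$-root is again a non-$J$-root, I would verify that $U_J$ normalises $U^{I\setminus J}$, that $U_J\cap U^{I\setminus J}=1$, and that every element of $U$ factors uniquely, yielding a semidirect decomposition $U=U_J\ltimes U^{I\setminus J}$ and therefore a retraction $\pi:U\twoheadrightarrow U_J$. Since $U$ is residually $p$-finite by \cite[Remark after Th.~4.1]{kn::E2}, $\pi$ extends to a continuous surjection $\widehat{\pi}:\widehat{U}\twoheadrightarrow\widehat{U_J}$ of pro-$p$ completions.

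Following the argument used in Proposition~\ref{non-affine}, Morita's description writes $U_J$ as a free product $U_1\ast U_2$ of two infinitely generated groups, so $\widehat{U_J}\cong\widehat{U_1}\amalg\widehat{U_2}$ is a free pro-$p$ product of two infinitely generated pro-$p$ groups, and is itself topologically infinitely generated by \cite[9.1.15]{kn::RZ}. Via $\widehat{\pi}$, this forces $\widehat{U}$ to be topologically infinitely generated. On the other hand, the hypothesis on $p$ combined with \cite[2.2 and 2.4]{kn::CR} (applied in the rank-$n$ setting, not only rank $2$) gives that $U^{crr}$ is topologically finitely generated. Since $\widehat{U}\twoheadrightarrow U^{crr}$ has kernel $C(\widehat{G},\widehat{U})$, this kernel must be topologically infinitely generated: otherwise finite topological generating sets of the kernel and of $U^{crr}$ would lift to a finite topological generating set of $\widehat{U}$, a contradiction. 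This establishes the first conclusion.

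For the second conclusion, observe that for $H\in\mP^0$ one has $H^\star=H$, because $H$ itself appears in the family defining $H^\star$; hence $\{H^\star\mid H\in\mP^0\}=\mP^0$, and it suffices to show $\mP^0$ is infinite. The elements of $\mP^0$ correspond bijectively with the open normal subgroups of $U^{crr}$ via $H\mapsto \overline{H}^{U^{crr}}$ with inverse $K\mapsto K\cap U$, so I would instead prove $U^{crr}$ is infinite. The minimal group $G$ embeds into $G^{crr}$ modulo its centre $Z(G)$, and since $Z(G)\subseteq H$ while $U\cap H=1$, this restricts to an injection $U\hookrightarrow U^{crr}$. Because an edge with $a_{ij}a_{ji}\geq 4$ forces an infinite dihedral subgroup of $W$, the Weyl group is infinite, so $U$ contains infinitely many distinct real positive root subgroups and is itself infinite; hence $U^{crr}$ is an infinite profinite group and admits infinitely many open normal subgroups, giving $|\mP^0|=\infty$.

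The main obstacle I anticipate is the rigorous justification of the semidirect product decomposition $U=U_J\ltimes U^{I\setminus J}$. Although folklore for Chevalley groups, its proof in the minimal Kac-Moody setting relies on delicate combinatorics of real roots and Tits's commutation relations, and must be formulated so that the retraction extends automatically to pro-$p$ completions. The other delicate point is ensuring that the rank-$n$ version of the Caprace-R\'emy finite generation result for $U^{crr}$ indeed applies verbatim under the hypothesis $p>\max|a_{kl}|$; if it requires additional structural assumptions in rank $n$, one may instead argue directly via the profinite analogue of the Reidemeister-Schreier argument applied to $\widehat{U}\twoheadrightarrow\widehat{U_J}$.
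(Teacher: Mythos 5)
Your argument for the first conclusion is essentially the paper's: the decomposition $U=U_J\ltimes U^{I\setminus J}$ that you propose to extract from the commutator relations is exactly the statement $P=U_P\rtimes L$, hence $U=U_P\rtimes U_L$ with $U_L=U\cap L$, which the paper simply cites from \cite[6.2.2]{kn::R} (so your anticipated ``main obstacle'' is a non-issue), and the rest --- right-exactness of pro-$p$ completion giving $\widehat{U}\twoheadrightarrow\widehat{U_L}$, Proposition~\ref{non-affine} for the rank-two Levi, finite generation of $U^{crr}$ from \cite{kn::CR}, and the extension argument identifying the kernel $C(\widehat{G},\widehat{U})$ as infinitely generated --- is identical. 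The only place you diverge is the second conclusion: the set in the statement should read $\{H^\star\mid H\in\mP\setminus\mP^0\}$, matching Proposition~\ref{non-affine} and case (3) of the trichotomy stated just before the lemma on centrality; your observation that $H^\star=H$ for $H\in\mP^0$ is correct but reduces the literal statement to the triviality that $U^{crr}$ is infinite, whereas the intended conclusion follows at once because an infinitely generated $C(\widehat{G},\widehat{U})$ is incompatible with cases (1) and (2) of that trichotomy.
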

\begin{proof}
Let $P$ be a parabolic subgroup of $G$ whose Levi complement corresponds to the subdiagram of the Dynkin diagram $\Delta$ of $G$ based on $\alpha_i$ and $\alpha_j$.
Then $P=U_P\rtimes L$ where 
$$L=\langle X_{\alpha}, T\mid \alpha\in \Delta^{re}\cap \Sp_{\mathbb{Z}}\{\alpha_i, \alpha_j\} \rangle$$ is a Levi complement of $P$ and $U_P=\cap_{g\in P} U^g$ \cite[6.2.2]{kn::R}.
Hence, $U=U_P\rtimes U_L$ where $U_L=U\cap L$.
It follows that $\widehat{U_L}\leq \widehat{U}$. Moreover, the natural isomorphism $U/U_P \xrightarrow{\cong} {U_L}$
yields an exact sequence
\begin{equation}\label{seqA1}
1 \longrightarrow
U_P
\xrightarrow{a}
U
\xrightarrow{b}  
U_L
\rightarrow 1.
\end{equation}
Since pro-$p$-completion is a right exact functor,
 the sequence
\begin{equation}\label{seqA2}
\widehat{U_P}
\xrightarrow{\widehat{a}}
\widehat{U}
\xrightarrow{\widehat{b}}
\widehat{U_L}
\rightarrow 1
\end{equation}
is exact as well. 
Therefore, $\widehat{U_L}$ is a homomorphic image of $\widehat{U}$. Since $\widehat{U_L}$ is an infinitely-generated pro-$p$ group, so is $\widehat{U}$.

As $p>\max\{|a_{ij}|, i\neq j\}$, 
the results of \cite{kn::CR} imply that $U^{crr}$ is a finitely generated pro-$p$ group.
Note that $C (\widehat{G},\widehat{U})$ is the kernel of the homomorphism
$\widehat{U}\rightarrow U^{crr}$. 
This finishes the proof. \end{proof}

It is possible to relate the calculations of
the congruence kernel of a Levi factor and of the unipotent
radical of a parabolic.
Let $J\subseteq I$ and $P{\coloneqq}P_J$
 a parabolic in $G_\mD(\bF)$
 with the unipotent radical $U_P$ and
 a Levi complement $L=\langle X_{\alpha}, T \mid \alpha\in \Delta^{re}\cap \Sp_{\mathbb{Z}}(J)\rangle$. Then $P=U_P\rtimes L$ \cite[6.2.2]{kn::R}
 and we have a natural isomorphism
$\widehat{L} \xrightarrow{\cong} \overline{L}^{\subseteq\widehat{G}}=\overline{L}^{\subseteq\overline{P}}$ where  $\overline{P}\coloneqq \overline{P}^{\subseteq\widehat{G}}$. 
%The closure in $\widehat{G}$ of $U_P$ is $\overline{U_P}$. 
 Let $U_L{\coloneqq}U\cap L$.
 Then $U_L$ is the unipotent radical of a Borel subgroup of $L$.
Two ``parabolic'' congruence kernels
%$$
%C (\widehat{G},\overline{U_P})\coloneqq\cap_{g\in\widehat{G}}\, g\overline{U_P}g^{-1}
%\ \mbox{ and } \ 
%C (\widehat{L},\widehat{U_L})\coloneqq\cap_{g\in\widehat{L}}\, g\widehat{U_L}g^{-1}
%$$
``approximate'' $C (\widehat{G},\widehat{U})$:
\begin{theorem}
  \label{approximate}
There exists an exact sequence of topological groups
\begin{equation}\label{seq1}
1 \longrightarrow
C (\widehat{G},\overline{U_P})
\longrightarrow
C (\widehat{G},\widehat{U})
\longrightarrow
%\prod_{g\in G} g C(\widehat{L},\widehat{U_L})g^{-1}
C(\widehat{L},\widehat{U_L})\rightarrow 1 \; . 
\end{equation}

%where the number of components in the product $\prod C(\widehat{L},\widehat{U_L})$
%is the index $|W:W_L|$ of the corresponding Coxeter subgroup.
%
%Moreover, the projection
%$
%C(\widehat{G},\widehat{U})
%\longrightarrow
%C (\widehat{L},\widehat{U_L})
%$
%is surjective.
Moreover, $C(\widehat{L}, \widehat{U_L})$ is a subgroup of $C(\widehat{G}, \widehat{U})$. In particular,
if $C (\widehat{L})\neq 1$ 
then
$C(\widehat{G})\neq 1$.
\end{theorem}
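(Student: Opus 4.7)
The plan is to derive the exact sequence from the split Levi decomposition $\widehat{P}=\overline{U_P}\rtimes\widehat{L}$, using the stated identification $\widehat{L}\cong\overline{L}^{\widehat{P}}$. Let $\pi\colon\widehat{P}\twoheadrightarrow\widehat{L}$ be the resulting projection with kernel $\overline{U_P}$; since $U=U_P\rtimes U_L$ is itself split, $\pi$ restricts to a surjection $\widehat{U}\twoheadrightarrow\widehat{U_L}$. I would define $\phi:=\pi|_{C(\widehat{G},\widehat{U})}$ and first check that $\phi$ takes values in $C(\widehat{L},\widehat{U_L})$: for $x\in C(\widehat{G},\widehat{U})$ and any $\ell\in\widehat{L}\leq\widehat{G}$, normality of the core gives $\ell x\ell^{-1}\in\widehat{U}$, so $\ell\,\pi(x)\,\ell^{-1}=\pi(\ell x\ell^{-1})\in\pi(\widehat{U})=\widehat{U_L}$, forcing $\pi(x)\in\bigcap_{\ell\in\widehat{L}}\ell\widehat{U_L}\ell^{-1}=C(\widehat{L},\widehat{U_L})$. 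Moreover $\ker\phi=C(\widehat{G},\widehat{U})\cap\overline{U_P}$ trivially contains $C(\widehat{G},\overline{U_P})$ since $\overline{U_P}\subseteq\widehat{U}$.

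The heart of the argument is to construct a section $C(\widehat{L},\widehat{U_L})\hookrightarrow C(\widehat{G},\widehat{U})$ given by the inclusion $\widehat{U_L}\hookrightarrow\widehat{U}$; this will simultaneously prove the ``Moreover'' assertion and the surjectivity of $\phi$. Given $y\in C(\widehat{L},\widehat{U_L})$ one needs $gyg^{-1}\in\widehat{U}$ for every $g\in\widehat{G}$. Bruhat $\widehat{G}=\widehat{B}N\widehat{B}$ together with $\widehat{U}\triangleleft\widehat{B}$ reduces this to $\dot v^{-1}\widetilde y\dot v\in\widehat U$ for each $v\in W$ and each $\widehat{B}$-conjugate $\widetilde y$ of $y$. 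The Levi decomposition $\widehat U=\overline{U_P}\rtimes\widehat{U_L}$, combined with normality of $C(\widehat L,\widehat{U_L})$ in $\widehat L$, lets me write $\widetilde y=c\cdot y'$ with $c\in\overline{U_P}$ and $y'\in C(\widehat{L},\widehat{U_L})$. Writing $v=wu$ with $w\in W_J=\langle s_i:i\in J\rangle$ and $u$ a minimum-length right coset representative for $W_J\backslash W$, the characterising property $u^{-1}(\Delta^{re}_+\cap\Sp_{\bZ}(J))\subseteq\Delta^{re}_+$ yields $\dot u^{-1}\widehat{U_L}\dot u\subseteq\widehat U$, while $\dot w^{-1}y'\dot w\in\widehat{U_L}$ by hypothesis on $y'$, giving $\dot v^{-1}y'\dot v\in\widehat U$. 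A companion Chevalley-style analysis of $\dot v^{-1}\overline{U_P}\dot v$ in terms of root subgroups $X_{v^{-1}\alpha}$ controls the remaining factor $\dot v^{-1}c\dot v$.

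The main obstacle is the reverse kernel inclusion $\ker\phi\subseteq C(\widehat{G},\overline{U_P})$: one must upgrade $C(\widehat{G},\widehat{U})\cap\overline{U_P}$ from $\widehat{P}$-stability to $\widehat{G}$-stability. For $x$ in the intersection and $g=b\dot vb'$, the same Bruhat reduction (using $\widehat{B}$-normality of $\overline{U_P}$) translates the claim into $\pi(\dot v^{-1}x\dot v)=1\in\widehat{U_L}$ for every $v\in W$. The image lies in $C(\widehat{L},\widehat{U_L})$ by the first paragraph, and the hard part will be to use uniqueness of the section together with $\pi(x)=1$ (and the same $W=W_JW^J$ decomposition together with the Chevalley commutator relations controlling $\dot v^{-1}\overline{U_P}\dot v\cap\widehat{U}$) to force this image to vanish. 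Once this is established the sequence is exact and split; the ``in particular'' clause then follows instantly, since if $C(\widehat{L})=C(\widehat{L},\widehat{U_L})\ne 1$ the section embeds it nontrivially into $C(\widehat{G})=C(\widehat{G},\widehat{U})$, yielding $C(\widehat{G})\ne 1$.
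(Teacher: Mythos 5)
There is a genuine gap, and it sits exactly at the step that carries all the weight of the theorem. Your setup (the split sequences coming from $U=U_P\rtimes U_L$ and $P=U_P\rtimes L$, and the check that $\widehat{b}$ maps $C(\widehat{G},\widehat{U})$ into $C(\widehat{L},\widehat{U_L})$) agrees with the paper. But your construction of the section breaks down on the $\overline{U_P}$-factor. After reducing to $\dot v^{-1}\widetilde y\,\dot v\in\widehat U$ and writing $\widetilde y=c\,y'$ with $c\in\overline{U_P}$ and $y'\in C(\widehat L,\widehat{U_L})$, you still must place $\dot v^{-1}c\,\dot v$ (or at least the product $\dot v^{-1}cy'\dot v$) inside $\widehat U$, and no ``Chevalley-style analysis'' can do this: already for $v=s_i$ with $i\notin J$ one has $X_{\alpha_i}\subseteq U_P$ and $\dot s_i^{-1}X_{\alpha_i}\dot s_i=X_{-\alpha_i}\not\subseteq U$, so $\dot v^{-1}\overline{U_P}\dot v\not\subseteq\widehat U$. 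Since $c$ is an essentially arbitrary element of $\overline{U_P}$ (it depends on $y$ and on the $\widehat B$-conjugator), the factor you propose to ``control'' cannot be controlled by root combinatorics. Your second admitted obstacle --- identifying $\ker\widehat b\cap C(\widehat G,\widehat U)$ with $C(\widehat G,\overline{U_P})$ --- is likewise left as a plan rather than an argument.

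The paper avoids all of this by a geometric argument that is absent from your proposal: it composes with the already-constructed open surjection $\phi:\widehat G\twoheadrightarrow G^{crr}$, whose kernel is $Z'(\widehat G)=Z(G)\times C(\widehat G,\widehat U)$. Because $\widehat L/(\overline B\cap\widehat L)\rightarrow\overline P/\overline B$ is onto, $C(\widehat L,\widehat{U_L})$ fixes pointwise the link of the simplex defined by $P$ in the building, hence $\phi\bigl(C(\widehat L,\widehat{U_L})\bigr)\subseteq U_P^{crr}$; since also $\phi(\widehat L)=L^{crr}$, the image lies in $L^{crr}\cap U_P^{crr}=1$. Thus $C(\widehat L,\widehat{U_L})\subseteq\ker\phi$, and since it sits inside $\widehat U$ and $Z(G)$ sits inside the torus, it lands in $C(\widehat G,\widehat U)$. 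This single inclusion is the ``Moreover'' statement, splits the sequence, and yields surjectivity of $\widehat b$ all at once. If you want to complete your write-up you should either import this building-theoretic step or find a genuine substitute for it; the direct Bruhat computation is not one.
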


\begin{proof}
  Notice that $\widehat{L}$ is a topological Kac-Moody group on its own letting us talk about $C(\widehat{L})=C(\widehat{L}, \widehat{U_L})$.
  
%The natural isomorphism $U/U_P \xrightarrow{\cong} {U_L}$
%yields an exact sequence
%$$
%1 \longrightarrow
%U_P
%\xrightarrow{a}
%U
%\xrightarrow{b}  
%U_L
%\rightarrow 1.
%$$
%Since pro-$p$-completion is a right exact functor,
%the sequence
%$$
%\widehat{U_P}
%\xrightarrow{\widehat{a}}
%\widehat{U}
%\xrightarrow{\widehat{b}}
%\widehat{U_L}
%\rightarrow 1
%$$
%is exact as well. 
  Let us examine the exact sequence~(\ref{seqA2}) in the proof of
  Corollary~\ref{Rank2Minor}. 
The image $\widehat{a}(\widehat{U_P})$ is a closed subgroup
containing $a(U_P)$.
Since $U_P$ is dense in $\widehat{U_P}$, 
$a(U_P)$ is dense in $\widehat{a}(\widehat{U_P})$.
This yields another exact sequence
\begin{equation}\label{seq2}
1 \longrightarrow
\overline{U_P}
\xrightarrow{\overline{a}}
\widehat{U}
\xrightarrow{\widehat{b}}
\widehat{U_L}
\rightarrow 1.
\end{equation}
The same argument applied to the semidirect decomposition $P=U_P\rtimes L$
gives an exact sequence with $\widehat{c}|_{\widehat{U}}=\widehat{b}$: 
\begin{equation}\label{seq2a}
1 \longrightarrow
\overline{U_P}
\xrightarrow{\overline{a}}
\overline{P}
\xrightarrow{\widehat{c}}
\widehat{L}
\rightarrow 1
\end{equation}
where the closure $\overline{P}=\overline{P}^{\subseteq\widehat{G}}$
is the completion of $P$ in the uniformity induced from $\widehat{G}$.
Loosely speaking,
both $\overline{P}$ and $\widehat{G}$ are obtained by pro-$p$-completion of $U$.
Both
$\overline{U_P}$
and
$\widehat{U}$
are subgroups of $\overline{P}$.
The map $\overline{a}$ is the inclusion of subgroups.
Conjugating them by all $g\in \overline{P}$
and then intersecting yields the inclusion
$\overline{a}:C (\overline{P},\overline{U_P})
\hookrightarrow
C (\overline{P},\widehat{U})$.
Moreover,
the sequence~(\ref{seq2}) restricts to a new sequence
\begin{equation}\label{seq3}
1 \longrightarrow
C (\overline{P},\overline{U_P})
\xrightarrow{\overline{a}}
C (\overline{P},\widehat{U})
\xrightarrow{\widehat{b}}
C(\widehat{L},\widehat{U_L})\; .
\end{equation}
%To prove this we examine the $\overline{P}$-action on $\widehat{L}$. 
Observe that $p\cdot y {\coloneqq} \widehat{c}(p)y \widehat{c}(p)^{-1}$,
$p\in\overline{P}$, $y\in\widehat{L}$, gives
a $\overline{P}$-action on $\widehat{L}$. 
It follows that
$\widehat{b} (C (\overline{P},\widehat{U}))
\subseteq C(\widehat{L},\widehat{U_L})$,
so that the sequence~(\ref{seq3}) is well-defined. 

We can conjugate 
$C (\overline{P},\overline{U_P})$
and
$C (\overline{P},\widehat{U})$
by all $g\in\widehat{G}$ and
intersect further.
This yields a subsequence of the sequence~\eqref{seq3}:
\begin{equation}\label{seq4}
1 \longrightarrow
C (\widehat{G},\overline{U_P})
\xrightarrow{\overline{a}}
C (\widehat{G},\widehat{U})
\xrightarrow{\widehat{b}}
C(\widehat{L},\widehat{U_L})\; . 
\end{equation}
This sequence is precisely the sequence~\eqref{seq1} in the statement of the theorem. It remains to establish surjectivity of $\widehat{b}$ in the sequence~\eqref{seq4}.

Consider the restriction of the continuous
homomorphism $\phi:\widehat{G}\rightarrow G^{crr}$ to $\widehat{L}$.
Clearly, $\phi(\widehat{L})=L^{crr}$.
In particular, $\phi(C(\widehat{L},\widehat{U_L}))\subseteq L^{crr}$.
%%%On the other hand, $C(\widehat{L},\widehat{U_L})$
%%%fixes the link in $\mBT (\widehat{G})=\mBT(G^{crr})$ defined by $L$.
We have an $\widehat{L}$-equivariant map 
$$
\eta: \mBT (\widehat{L})_m = \widehat{L}/(\overline{B}\cap\widehat{L})
\rightarrow
\mBT (\widehat{G})_m = \widehat{G}/\overline{B}, \ \
g(\overline{B}\cap\widehat{L})
\mapsto g\overline{B}, 
$$
where by $\mBT (\widehat{G})_m$ and $\mBT (\widehat{L})_m$ we denote the set of simplices of maximal dimension in the corresponding Tits buildings. 
As a subset of $\mBT (\widehat{G})=\mBT(G^{crr})$, the image of $\eta$ consists of those simplices that have $P$ as a face because, corestricted to its image, $\eta$ can be identified with the natural map $\widehat{L}/(\overline{B}\cap\widehat{L}) \rightarrow \overline{P}/\overline{B}$

Since  $C(\widehat{L},\widehat{U_L})$ acts trivially on  $\mBT (\widehat{L})$,
it follows that $C(\widehat{L},\widehat{U_L})$
fixes the image of $\eta$. Since the stabiliser of an individual simplex is a Borel subgroup, the fixator of all these simplices is $C(\overline{P}, \overline{B})$. 
It follows from \cite[Th 6.3]{kn::RCsimp} 
that $C(\overline{P}, \overline{B})$ is equal to $U_P^{crr}T^\prime$, where $T^\prime$ is a subgroup of a torus in $\overline{B}$. Therefore, $\phi(C(\widehat{L},\widehat{U_L}))\subseteq  U_P^{crr}T^\prime$.

Now $C(\widehat{L},\widehat{U_L})$ and $U_P^{crr}$ are pro-$p$-groups, while $T^\prime$ is a finite $p^{\prime}$-group, i.e., a group of order coprime to $p$. 
Thus, $\phi(C(\widehat{L},\widehat{U_L}))\subseteq U_P^{crr}$. 
Furthermore, $\phi(C(\widehat{L},\widehat{U_L}))\subseteq L^{crr}\cap U_P^{crr}=1$. 
It follows that $C(\widehat{L},\widehat{U_L})$ is contained
in the kernel of $\phi$ that is equal to
$Z'(\widehat{G})= Z(G)\times C(\widehat{G},\widehat{U})$.
Since $Z(G)$ is a subgroup of the torus \cite[Cor. 5.14]{kn::RCsimp},
it is a finite $p^{\prime}$-group, while $C(\widehat{G},\widehat{U})$ is a pro-$p$-group.
It follows that $C(\widehat{L},\widehat{U_L})$ is contained in $C(\widehat{G},\widehat{U})$.
This inclusion splits the sequence~(\ref{seq4}) proving surjectivity of $\widehat{b}$.
\end{proof}


\begin{thebibliography}{202020}
\bibitem[A]{kn::A} P. Abramenko, 
{\em Finiteness properties of groups acting on twin buildings}, 
 {Groups: topological, combinatorial and arithmetic aspects}, 21--26, 
LMS Lecture Note Ser. 311, Cambridge Univ. Press, 2004. 
\bibitem[B1]{kn::Bou} N. Bourbaki, {\em General Topology, parts 1 and 2}, Hermann, 1966.
\bibitem[B2]{kn::Bou2} N. Bourbaki, {\em Lie Algebras, Ch. IV--VI}, Hermann, 1968.
\bibitem[CKRu]{CKRu} I.~Capdeboscq, K.~Kirkina, D.~Rumynin,  
{\em Presentations of affine Kac-Moody groups},
{Forum of Mathematics, Sigma}, 6 (2018), e21, 35 pp.       
\bibitem[CLR]{kn::CRL} I.~Capdeboscq, A.~Lubotzky, B.~R\'{e}my, 
{\em Presentations: from Kac-Moody groups to profinite and back},  
Transform. Groups 21 (2016), 929--951.
\bibitem[CR]{kn::CR} I.~Capdeboscq, B.~R\'{e}my,
  {\em On some pro-$p$ groups from infinite-dimensional Lie theory}, Math. Z. 278 (2014), 39--54.
\bibitem[CaR]{kn::CaR} P.-E.~Caprace, B.~R\'{e}my,
  {\em Simplicity and superrigidity of twin building lattices},
  Invent. Math. 176 (2009), 169--221.
\bibitem[CaR2]{kn::RCsimp} P.-E. Caprace, B. R\'{e}my,
{\em Groups with a root datum},  
  Innov. Incidence Geom. 9 (2009), 5--77. 
\bibitem[CarERi]{kn::E} L.~Carbone, M.~Ershov, G.~Ritter,
{\em Abstract simplicity of complete Kac-Moody groups over finite fields},
J. Pure Appl. Algebra 212 (2008), 2147--2162. 
%\bibitem{LL} C. J. Bushnell, G. Henniart, \textit{The Local Langlands Conjecture for GL(2)}, Springer, \textit{2006}.
%\bibitem{Davis1} M.W.Davis, \textit{The Geometry and Topology of Coxeter groups}, London Mathematical Society Monographs, \textit{2008}.
%
%\bibitem{Davis2} M. W. Davis, \textit{Buildings are CAT(0)}, Geometry and Cohomology in Group Theory, \textit{1994}, Durham, 108 -- 123.
%
%
%\bibitem{Paul} P. B. Garrett, \textit{Buildings and classical groups}, Chapman $\&$ Hall, \textit{1997}
\bibitem[E]{kn::E2}  M.~Ershov,
{\em Golod-Shafarevich groups with property (T) and Kac-Moody groups},
Duke Math. J. 145 (2008), no. 2, 309--339.   
\bibitem[G]{kn::G} D. Gorenstein, {\em Finite groups}, Harper \& Row, 1968.  
\bibitem[HRo]{HR} E. Hewitt, K. Ross, {\em Topological groups}, Springer, 1997.
\bibitem[HrRu]{HrRu} K. Hristova, D. Rumynin, 
{\em Kac-Moody Groups and Cosheaves on Davis Building},
 J. Algebra 515 (2018), 202--235.  %arxiv:1704.07880. 
  %\bibitem{Iw} Iwahori, 
%Generalized Tits systems (Bruhat decomposition) on p-adic semisimple
%groups,
%proceedings of the 1965 AMS Summer Institute titled
\bibitem[Ku]{Kum} S. Kumar,
  {\em Kac-Moody groups, their flag varieties and representation theory}, Birkh\"{a}user, 2002.
%\bibitem[M]{Mar} T. Marquis, {\em Topological Kac–Moody groups and their subgroups}, Ph.D. Thesis, Universit\'{e} Catholique de Louvain, 2013.
\bibitem[M1]{Mar1} T.~Marquis, 
{\em Abstract simplicity of locally compact Kac-Moody groups},
Compos. Math. 150 (2014), no. 4, 713--728.
\bibitem[M2]{Mar2} T.~Marquis, {\em Around the Lie correspondence for complete Kac-Moody groups and Gabber-Kac simplicity},
  arXiv:1509.01976.
\bibitem[M3]{Mar3}   T.~Marquis, {\em An introduction to Kac-Moody groups over fields}, European Mathematical Society (EMS), 2018.
\bibitem[Mo]{kn::Mo} J.~Morita, {\em Root strings with three or four real roots in Kac-Moody root systems}, Tohoku Math. J. (2) 40 (1988), no. 4, 645--650. 
%\bibitem[RCsimp???]{kn::RCsimp} {\bf Inna, what is this?}
\bibitem[ReW]{RWe}
C. Reid, P. Wesolek, 
{\em Homomorphisms into totally disconnected, locally compact groups with dense image},
Forum Math. 31 (2019), no. 3, 685--701.
%arXiv:1509.00156.
\bibitem[R]{kn::R} B.~R\'{e}my,
  {\em Groupes de Kac-Moody d\'{e}ploy\'{e}s et presque d\'{e}ploy\'{e}s},
    Ast\'{e}risque No. 277, 2002. 
\bibitem[RibZ]{kn::RZ} L. Ribes, P. Zalesskii, {\em Profinite Groups},
Springer, 2010.   
%\bibitem{Ren} Renard, Repr\'{e}sentations des groupes r\'{e}ductifs p-adiques.
%\bibitem{Serre} Trees
\bibitem[Rou]{kn::Rou} G.~Rousseau,
  {\em Groupes de Kac-Moody d\'{e}ploy\'{e}s sur un corps local II. Masures ordonn\'{e}es}, Bull. Soc. Math. France 144 (2016), 613--692.
\bibitem[T1]{kn::T} J. Tits, 
{\em D\'{e}finition par g\'{e}n\'{e}rateurs et relations de groups avec BN-paires},
%(French. English summary) [Definition by generators and relations of groups with BN-pairs] 
C. R. Acad. Sci. Paris S\'{e}r. I Math. 293 (1981), no. 6, 317--322.
\bibitem[T2]{T2} J. Tits, 
{\em Groups and group functors attached to Kac-Moody data}, Workshop Bonn 1984, 193--223, Lecture Notes in Math. 1111, Springer, 1985.
\bibitem[T3]{T3} J. Tits, 
{\em Uniqueness and presentation of Kac-Moody groups over fields}, J. Algebra 105 (1987), no. 2, 542--573. 
%\bibitem{CaCh} R. Carter, Y. Chen, 
%Automorphisms of Affine Kac-Moody Groups and
%Related Chevalley Groups over Rings,
%ournal of Algebra 155 (1993), 44--54.
%\bibitem{Mor} J. Morita, 
%Commutator relations in Kac-Moody groups. 
%Proc. Japan Acad. Ser. A Math. Sci.  63  (1987),  21--22.
%\bibitem{KB} Kenneth Brown, Buildings
\end{thebibliography}
\end{document}